\documentclass[12pt,a4paper]{article}
\usepackage[a4paper,margin=2cm]{geometry}

\RequirePackage{etex} 
\usepackage{amsmath}
\usepackage{amsthm}
\usepackage{subcaption}
\usepackage{amssymb}
\usepackage{enumitem}
\usepackage{graphicx}
\usepackage{nccmath}
\usepackage{setspace}
\usepackage[colorlinks=true, allcolors=blue]{hyperref}
\usepackage{enumitem}
\usepackage{multirow}
\usepackage{float}
\usepackage[dvipsnames]{xcolor}
\usepackage{tikz}
\usetikzlibrary{arrows.meta}
\usetikzlibrary{calc}

\newtheorem{theorem}{Theorem}

\newtheorem{lemma}[theorem]{Lemma}

\newtheorem{note}{Note}

\usepackage{thmtools}

\theoremstyle{definition}
\newtheorem{example}[theorem]{Example}

\def \mod#1{{\:({\rm mod}\ #1)}}

\def \geq {\geqslant}

\let\oldproofname=\proofname
\renewcommand{\proofname}{\textup{\textbf{\oldproofname}}}

\title{Chain--collider--fork Decompositions of Transitive Tournament}

\author{
Ajani De Vas Gunasekara\\[0.5em]
{\small School of Arts and Sciences, The University of Notre Dame Australia}\\
{\small Sydney NSW 2007, Australia}\\
{\small ajani.de.vas.gunasekara@nd.edu.au}}

\date{}

\begin{document}

\maketitle

\begin{abstract}
A transitive tournament is an acyclic orientation of a complete graph. 
We study decompositions and packings of the transitive tournament \(TT_n\) into connected two-arc motifs. 
The three motifs considered are chains, colliders, and forks, which are also fundamental local configurations in directed acyclic graphs. 
We first construct decompositions of \(TT_n\) into mixtures of these motifs whenever such decompositions exist. 
We then consider the corresponding pure packing problem for each individual motif. 
For \(H\) equal to a chain, a collider, or a fork, we determine the maximum number of arc-disjoint copies of \(H\) in \(TT_n\). 
These results give a precise extremal description of two-arc motif packings in transitive tournaments and suggest further questions on motif decompositions in broader classes of directed acyclic graphs.
\end{abstract}

\section{Introduction}

An \textit{undirected graph} or simply a \textit{graph} $G$ is a pair $(V,E)$ where $V$ is the set of vertices and $E$ is the set of edges consisting of unordered pairs of vertices. We denote the vertex set and the edge set of $G$ by $V(G)$ and $E(G)$. A \textit{directed graph} or a \textit{digraph} $D$ consists of a non-empty finite set of \textit{vertices} $V(D)$ and a finite set $A(D)$ of ordered pairs of distinct vertices called \textit{arcs}; often denoted as $D = (V,A)$. For an arc $(u,v) \in A(D)$, $u$ is called its tail, and $v$ is called its head. Furthermore, $u$ dominates $v$ or $v$ is dominated by $u$ and denoted by $ u \rightarrow v$. A digraph $H$ is a \textit{subdigraph} of a digraph $D$ if $V(H) \subseteq V(D), A(H) \subseteq A(D),$ and every arc in $A(H)$ has both end-vertices in $V(H)$.

An \textit{orientation} of an undirected graph is the assignment of a specific direction to each edge, transforming it into a digraph. A \textit{tournament} is a digraph with exactly one edge between each two vertices, in one of the two possible directions. Equivalently, a tournament is an orientation of the complete graph. A digraph $D$ is \emph{transitive} if for every three distinct vertices $u,v,w \in V(D)$, $(u,v),(v,w) \in A(D)$ implies that $(u,w) \in A(D)$. 
Although there are many non-isomorphic tournaments on \(n\) vertices, there is a unique transitive tournament. It is obtained by labelling the vertices \(v_1,\dots,v_n\) and orienting each edge from \(v_i\) to \(v_j\) whenever \(i<j\) (see \cite{BangJensenGutin2009}). 
We denote the transitive tournament on \(n\) vertices, or equivalently the transitive tournament of order \(n\), by \(TT_n\). The vertex labelling described above is referred to as a \textit{topological ordering} or \textit{acyclic ordering} of \(TT_n\). Throughout, we represent \(TT_n\) as in Figure~\ref{fig:topo order}.

\begin{figure}[ht]
    \centering

\begin{tikzpicture}[scale=1.3, transform shape, >=Stealth, every node/.style={font=\small}]
  \node[circle, fill=black, inner sep=1.3pt, label=below:$v_1$]   (v1)  at (0,0)   {};
  \node[circle, fill=black, inner sep=1.3pt, label=below:$v_2$]   (v2)  at (1.4,0) {};
  \node[circle, fill=black, inner sep=1.3pt, label=below:$v_{n-2}$] (vn2) at (5.2,0) {};
  \node[circle, fill=black, inner sep=1.3pt, label=below:$v_{n-1}$] (vn1) at (6.4,0) {};
  \node[circle, fill=black, inner sep=1.3pt, label=below:$v_n$]   (vn)  at (7.6,0) {};

  \draw [->] (v1) to (v2);
  \draw[dotted] (v2) -- (vn2);
  \draw [->] (vn2) to (vn1);
  \draw [->] (vn1) to (vn);


  \draw[->] (v1) to[bend left=35] (vn2);
  \draw[->] (v1) to[bend left=45] (vn1);
  \draw[->] (v1) to[bend left=55] (vn);

  \draw[->] (v2) to[bend left=25] (vn2);
  \draw[->] (v2) to[bend left=35] (vn1);
  \draw[->] (v2) to[bend left=45] (vn);

  \draw[->] (vn2) to[bend left=35] (vn);
\end{tikzpicture}

    \caption{Topological ordering of a transitive tournament of order $n$}
    \label{fig:topo order}
\end{figure}

Recent years have witnessed considerable activity in the study of graph and hypergraph decompositions. A fundamental question in this area is whether the edge set of a graph from a given class, such as complete graphs, hypergraphs, or directed graphs, can be partitioned into subgraphs satisfying a prescribed property. For digraphs \(D\) and \(H\), we say that \(D\) is \(H\)-decomposable if \(A(D)\) admits a partition into pairwise disjoint sets, each inducing a subdigraph isomorphic to \(H\). An obvious necessary condition for the existence of such a decomposition is that \(|A(H)|\) divides \(|A(D)|\).
More generally, if a family of pairwise arc-disjoint copies of \(H\) in \(D\) covers only a proper subset of \(A(D)\), then this family is called an \textit{\(H\)-packing} of \(D\). The \textit{$H$-packing number} of $D$ is the maximum size of a $H$-packing of $D$.

In this paper, we study decompositions of transitive tournaments into connected two-arc digraphs. Ignoring orientation, each such digraph is isomorphic to a path of length two. We consider decompositions involving various combinations of the different isomorphism types of these digraphs and determine the maximum number of copies of each type that can be attained. Equivalently, we determine the packing number of each isomorphism class of \(TT_n\).

Path decompositions of various types of graphs have been studied extensively, including complete graphs \cite{Tarsi1983}, bipartite graphs \cite{ChuFanZhou2021} and graph products \cite{DeVasDevillers2024}.
The problem of decomposing digraphs into directed paths was first explored by Alspach and Pullman in 1974 \cite{AlspachPullman1974}. They established bounds on the minimum number of paths necessary for such decompositions. A conjecture of Alspach, Mason, and Pullman asks for the minimum number of paths needed in a path decomposition of a general tournament $T$. There is a natural lower bound for this number in terms of the degree sequence of $T$ and it is conjectured that this bound is correct for tournaments of even order \cite{AlspachMasonPullman1976}; \cite{LoPatelSkokanTalbot2020} and \cite{GiraoGranetKuhnLoOsthus2023} have made progress on the conjecture. 

In \cite{Gorlichetal2007} and \cite{Gorlichetal2007II}, the authors characterised the digraphs \(H\) with at most four edges such that \(TT_n\) admits an \(H\)-decomposition. The connected case is treated in \cite{Gorlichetal2007}, whereas \cite{Gorlichetal2007II} deals with disconnected digraphs $H$. The disconnected case is of particular relevance here since some of the connected components considered are connected digraphs with two edges. Further related work was carried out in \cite{SaliSimonyiTardos2021}, where the authors studied decompositions of transitive tournaments into a prescribed number of isomorphic digraphs. Moreover, \cite{WangWuAn2017} provides a characterisation of tournaments that admit decompositions into two-arc digraphs isomorphic to a directed path.

The remainder of the paper is organised as follows. Section~\ref{S:Section 2} lays the foundations for the paper by introducing further key terms and definitions. Section~\ref{S:sec3} is devoted to establishing the main tool used in proving the results throughout the paper. Sections \ref{S:sec4}--\ref{S:sec6} examine the different isomorphism classes of connected two-arc digraphs in detail and establish the corresponding results. Finally, Section~\ref{S:sec7} concludes the paper with a discussion of common properties of these decompositions and possible directions for future research.

\section{Preliminaries}
\label{S:Section 2}

This section will establish the foundation for our work. For a vertex $v\in V(D)$, the \emph{in-degree} of $v$ is the number of arcs in $A(D)$ whose head is $v$, and the \emph{out-degree} of $v$ is the number of arcs in $A(D)$ whose tail is $v$. These are denoted by $\deg_D^{-}(v)$ and $\deg_D^{+}(v)$, respectively, or simply by $\deg^{-}(v)$ and $\deg^{+}(v)$ when the digraph $D$ is clear from the context. Let   \(
N_D^{+}(v)=\{u\in V(D)\setminus\{v\}: (v,u)\in A(D)\} \text{ and }
N_D^{-}(v)=\{w\in V(D)\setminus\{v\}: (w,v)\in A(D)\}.
\)
The sets $N_D^{+}(v)$ and $N_D^{-}(v)$ are called the \textit{out-neighbourhood} and \textit{in-neighborhood} of $v$, respectively.
The vertices in the in-neighbourhood of a vertex are called its in-neighbours, and the vertices in its out-neighbourhood are called its out-neighbours.

The \emph{adjacency matrix} of $TT_n$ is $M(TT_n)=[m_{ij}]$, the $n\times n$
$0$--$1$ matrix defined by
\[
m_{ij}=
\begin{cases}
1, & \text{if } v_i \to v_j,\\
0, & \text{otherwise.}
\end{cases}
\]
Equivalently, $m_{ij}=1$ precisely when $i<j$ (and $m_{ii}=0$ for all $i$).

In a transitive tournament, every connected two-arc subdigraph on three distinct vertices \(v_i,v_j,v_k\) is isomorphic to exactly one of the following three digraphs: a \emph{chain} \((v_i\to v_j\to v_k)\), a \emph{fork} \((v_i\leftarrow v_j\to v_k)\), or a \emph{collider} \((v_i\to v_j\leftarrow v_k)\). Equivalently, chains, forks, and colliders are precisely the three possible orientations of the $2$-path \(P_3\). In this sense, they form the fundamental local structural building blocks for describing and analysing transitive tournaments. We sometimes call chains, colliders, and forks two-arc motifs or connected two-arc motifs. 

It is well known that the complete graph \(K_n\) can be decomposed into copies of \(P_3\) if and only if the number of edges of \(K_n\) is even; that is, if and only if \(n \equiv 0,1 \pmod{4}\).
We call \(n\) admissible if and only if \(n \equiv 0 \pmod{4}\) or \(n \equiv 1 \pmod{4}\).
This is because the number of two-arc motifs in \(TT_n\) is \(\frac{n(n-1)}{4}\), which must be an integer.
Since \(n\) and \(n-1\) have opposite parity, integrality forces one of them to be divisible by \(4\);
equivalently, \(n \equiv 0 \pmod{4}\) or \(n-1 \equiv 0 \pmod{4}\).

Let \(G\) be a connected graph. In 1980, Caro and Sch\"onheim proved that the obvious necessary condition \(|E(G)| \equiv 0 \pmod{2}\) is also sufficient for \(G\) to admit a decomposition into copies of the path \(P_3\) of length two. 

\begin{theorem}[\cite{CaroSchonheim1980}]
\label{T:2-path}
A connected graph \(G\) admits a \(P_3\)-decomposition if and only if
\[
|E(G)| \equiv 0 \pmod{2}.
\]
\end{theorem}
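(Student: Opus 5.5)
Necessity is immediate: each copy of \(P_3\) uses exactly two edges, so a decomposition partitions \(E(G)\) into pairs and \(|E(G)|\) must be even. The content lies in sufficiency. I would prove it by induction on \(|E(G)|\), and in fact prove a slightly stronger statement that is easier to carry through the induction. The strengthened claim is: every connected graph \(G\) has its edge set partitioned into copies of \(P_3\) together with at most one leftover edge. Here \(P_3\) means two edges sharing a vertex. When \(|E(G)|\) is even, the leftover edge cannot occur, which gives Theorem~\ref{T:2-path}.

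For the strengthened claim I would use the classical spanning-tree (or DFS-tree) pairing argument, which is cleaner than induction on deleting edges.
\begin{enumerate}[label=(\roman*)]
\item Fix a root \(r\) and a spanning tree \(T\) of \(G\), say a BFS tree. Assign every non-tree edge to one of its endpoints arbitrarily. Tree edges are considered at the vertex that is their lower endpoint (the child side).
\item Process the vertices in order of decreasing depth, leaves first. At a vertex \(v\) with parent edge \(e_v\), collect the edges currently charged to \(v\). These are the non-tree edges assigned to \(v\) and the tree edges \(vc\) to children \(c\) that were not already used when \(c\) was processed.
\item All these edges share the vertex \(v\), so pair them up arbitrarily into copies of \(P_3\) centred at \(v\).
\item If an odd number remain, pair the last one with the parent edge \(e_v\), which also contains \(v\). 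In that case \(e_v\) is consumed. Otherwise \(e_v\) stays unused and gets charged to the parent.
\item At the root there is no parent edge, so at most one edge is left over.
\end{enumerate}
Every edge is used exactly once. Each pair shares a vertex, so each pair is a \(P_3\). Two distinct edges sharing an endpoint in a simple graph always form a path of length two, so no degenerate cases arise.

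The only step needing care is the bookkeeping. I must check that each tree edge \(vc\) is either consumed at \(c\) or passed up to \(v\), and is never double-counted. I also must check that non-tree edges are counted exactly once, at their assigned endpoint. Once that invariant is stated, a parity count finishes the proof. The number of edges left over at the root is congruent to \(|E(G)| \bmod 2\), so it is zero when \(|E(G)|\) is even, and the theorem follows. Connectivity is used precisely to have a single spanning tree, so that every edge eventually gets resolved toward one root. For disconnected graphs, each component would need an even edge count.
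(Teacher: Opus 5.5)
The paper does not prove this statement. It quotes it from Caro and Sch\"onheim and uses it only as a black box, applied to the underlying graph $K_n$ of $TT_n$ for admissible $n$. There is therefore no argument in the paper to compare against, and your proposal is a correct, complete proof on its own.

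Your leaves-to-root sweep over a rooted spanning tree is sound:
\begin{itemize}
\item Each non-tree edge is consumed exactly once, at its assigned endpoint.
\item Each tree edge $e_c$ is consumed either at $c$, where it absorbs an odd leftover, or at the parent of $c$.
\item Every pair consists of two distinct edges at a common vertex of a simple graph, so it is a genuine $P_3$. In particular, the odd leftover at $v$ can never be parallel to $e_v$.
\item The parity count at the root finishes the argument.
\end{itemize}

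Compared with the citation, your route is self-contained and constructive, and it runs in linear time. It also gives the stronger statement that every connected graph splits into copies of $P_3$ plus at most one edge, which is the natural packing form. The citation simply keeps the paper short.

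The only blemish is cosmetic. You announce an induction on $|E(G)|$ but never use one; the depth-ordered sweep is itself the whole proof, so drop the induction framing.
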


\begin{theorem} [\cite{Gorlichetal2007}]
\label{T: no single decomp}
For any admissible \(n\), the transitive tournament \(TT_n\) does not admit a decomposition into isomorphic copies of any one of the digraphs chains, colliders or forks.

\end{theorem}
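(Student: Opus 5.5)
The plan is to find, for each of the three motifs, a local obstruction at a single vertex or a single arc. The obstruction should come directly from the topological ordering $v_1,\dots,v_n$ of $TT_n$, where $\deg^-(v_i)=i-1$ and $\deg^+(v_i)=n-i$. Throughout I assume $n\ge 4$, since the case $n=1$ has no arcs and the statement is only meaningful when there is something to decompose.

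\emph{Chains.} Take the arc $(v_1,v_n)$. A chain has two arcs $x\to y\to z$. If it contains $(v_1,v_n)$, then either that arc is $x\to y$ or it is $y\to z$.
\begin{itemize}
\item If it is $x\to y$, then $y=v_n$ needs an out-arc $(v_n,z)$. None exists, because $\deg^+(v_n)=0$.
\item If it is $y\to z$, then $y=v_1$ needs an in-arc $(x,v_1)$. None exists, because $\deg^-(v_1)=0$.
\end{itemize}
So $(v_1,v_n)$ lies in no chain of $TT_n$, and no chain-decomposition exists.

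\emph{Colliders.} In a collider $a\to b\leftarrow c$, each vertex either receives exactly two arcs (the centre) or receives none (a leaf). Hence in any collider-decomposition every vertex has even in-degree. But $\deg^-(v_2)=1$ is odd, so no collider-decomposition exists.

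\emph{Forks.} This case is dual to the colliders. In a fork $a\leftarrow b\to c$, each vertex sends out either exactly two arcs or none. So every vertex would need even out-degree, and $\deg^+(v_{n-1})=1$ rules this out. Alternatively, reversing all arcs maps $TT_n$ to itself and swaps forks with colliders, so this case follows from the previous one.

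There is no real obstacle here. The only care needed is to state precisely which arcs each motif uses at each vertex. Note also that these arguments do not use admissibility: they rule out single-motif decompositions for every $n\ge 3$.
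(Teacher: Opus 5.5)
Your proof is correct and essentially matches the paper's. The chain case uses the same arc $v_1\to v_n$. Your parity arguments for colliders and forks rephrase the paper's observation that $v_1\to v_2$, the unique in-arc of $v_2$, cannot lie in a collider, and that $v_{n-1}\to v_n$, the unique out-arc of $v_{n-1}$, cannot lie in a fork; your explicit exclusion of the degenerate case $n=1$, where the empty family is trivially a decomposition, is a point the paper leaves implicit.
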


\begin{proof}
Let $\{v_1,v_2,\ldots, v_{n-1},v_n\}$ be the topological ordering of the vertices of $TT_n$. Suppose that $TT_n$ has an $H$-decomposition where all the copies of $H$ are either colliders, chains, or forks.  Note that in the decomposition, the arc $v_{n-1} \rightarrow v_n$ can only be in a chain or a collider, such that $v_x \rightarrow v_{n-1} \rightarrow v_n$  or $v_{n-1} \rightarrow v_n \leftarrow v_y$ where $v_x,v_y \in \{v_1,v_2, \ldots, v_{n-2}\}$. 
Thus, it is a contradiction that a fork decomposition of $TT_n$ exists for any admissible $n$. 

Moreover, the arc $v_1 \rightarrow v_n$ can only be in a fork or a collider such that $v_1 \rightarrow v_n \leftarrow v_y$ or $v_x \leftarrow v_1 \rightarrow v_n$ where $v_x,v_y \in \{v_2, \ldots, v_{n-1}\}$. Therefore, a chain decomposition of $TT_n$ for any admissible order $n$ does not exist.

Furthermore, the arc $v_1 \rightarrow v_2$ can only be in a chain or a fork, such that $v_1 \rightarrow v_2 \rightarrow v_y$ or $v_x \leftarrow v_1 \rightarrow v_2$ where $v_x,v_y \in \{v_3, \dots, v_{n-1},v_n\}$. Therefore, it is a contradiction that a collider decomposition of $TT_n$ for any admissible order $n$ exists.
\end{proof}

By Theorem~\ref{T: no single decomp}, we know that for any admissible $n$,  $TT_n$ cannot be decomposed entirely into chains, entirely into colliders, or entirely into forks. However, mixed decompositions are possible; that is, \(TT_n\) may be decomposed into a combination of chains, colliders, and forks. Which is more like a packing of each kind. From now on, we will refer to such mixed decompositions as chain--collider--fork decompositions.

\begin{theorem}

For all admissible \(n\), the transitive tournament \(TT_n\) admits a chain--collider--fork decomposition.
\end{theorem}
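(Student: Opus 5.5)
The plan is induction on \(n\) in steps of \(4\), starting from explicit decompositions for the two smallest admissible orders \(n=4\) and \(n=5\). At every stage the decomposition will contain \emph{at least one} chain, one collider and one fork, so it is a genuine mixed decomposition and not merely a lift of a \(P_3\)-decomposition. The tool is to split the arc set of \(TT_{n+4}\) into three parts, each of which is easy to decompose.

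\emph{Base cases.} For \(n=4\), with vertices \(v_1,\dots,v_4\) in topological order, I take
\[
v_1\to v_2\to v_3,\qquad v_3\leftarrow v_1\to v_4,\qquad v_2\to v_4\leftarrow v_3 .
\]
These are a chain, a fork and a collider, and together they use all six arcs exactly once. For \(n=5\), I take
\[
v_2\to v_3\to v_4,\qquad v_3\to v_5\leftarrow v_4,\qquad v_4\leftarrow v_2\to v_5,\qquad v_2\leftarrow v_1\to v_3,\qquad v_4\leftarrow v_1\to v_5 .
\]
This is one chain, one collider and three forks. They use all ten arcs of \(TT_5\) exactly once, as is checked directly.

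\emph{Inductive step.} Let \(n\ge 4\) be admissible, and view \(TT_{n+4}\) as \(TT_n\) on \(v_1,\dots,v_n\) together with \(TT_4\) on \(v_{n+1},\dots,v_{n+4}\). The remaining arcs are all \(4n\) arcs \(v_i\to v_{n+j}\) with \(1\le i\le n\) and \(1\le j\le 4\). These three arc sets partition \(A(TT_{n+4})\).
\begin{itemize}[leftmargin=1.5em]
\item I decompose \(TT_n\) by the induction hypothesis.
\item I decompose the copy of \(TT_4\) by the base construction for \(n=4\), relabelled.
\item For the crossing arcs, each old vertex \(v_i\) has exactly four out-arcs to the new block. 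I pair them into the two forks \(v_{n+1}\leftarrow v_i\to v_{n+2}\) and \(v_{n+3}\leftarrow v_i\to v_{n+4}\).
\end{itemize}
The union of these three decompositions is a decomposition of \(TT_{n+4}\) into chains, colliders and forks. The \(TT_4\) block alone already contributes a chain, a collider and a fork, so all three types occur. Since every admissible \(n\) is reached from \(4\) or \(5\) by adding multiples of \(4\), this proves the statement.

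The only step requiring genuine care is the base cases, in particular \(n=5\). An odd order gives some vertices odd in- or out-degree, so the obvious symmetric pairings fail, and a valid configuration had to be found by hand. The inductive gluing is routine, because all crossing arcs point from old to new vertices: pairing them at their common tail always produces forks, regardless of the parity of \(n\).
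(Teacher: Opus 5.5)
Your proof is correct, but it takes a different route from the paper's. The paper forgets orientations and notes that $K_n$ is connected with an even number of edges for admissible $n$. It then invokes the Caro--Sch\"onheim theorem (Theorem~\ref{T:2-path}) to obtain a $P_3$-decomposition, and observes that each $P_3$, oriented as in $TT_n$, is automatically a chain, a collider or a fork. That argument is two lines long and works for any orientation of any connected graph with an even number of edges. However, it rests on an external theorem and gives no control over which motifs occur or how many.

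Your induction is self-contained and fully explicit, and it checks out:
\begin{itemize}
\item the splitting of $A(TT_{n+4})$ into $A(TT_n)$, a relabelled $A(TT_4)$, and the crossing arcs $v_i\to v_{n+j}$ is a genuine partition;
\item the crossing arcs at each old vertex share a tail, so they pair into forks;
\item both of your base cases use every arc exactly once.
\end{itemize}
Your construction also yields more than the statement asks. It guarantees that all three motif types appear. It gives exact counts: $\lfloor n/4\rfloor$ chains, $\lfloor n/4\rfloor$ colliders, and the rest forks. This is closer in spirit to the paper's later explicit dots-in-cells constructions.

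Two small remarks. First, $n=1$ is admissible under the paper's definition but is not reached from $4$ or $5$. It is trivially covered by the empty decomposition, although your claim that every stage contains all three types fails there. Second, your $n=5$ configuration is exactly your inductive step applied to $TT_1$ with the empty decomposition. So no hand search was needed: you can simply start the induction at $n\in\{1,4\}$.
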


\begin{proof}
Ignoring orientation, the transitive tournament \(TT_n\) is the complete graph \(K_n\). Since \(n\) is admissible, Theorem~\ref{T:2-path} implies that \(K_n\) admits a decomposition into paths of length two.

Now restore the orientation induced by \(TT_n\). Each copy of \(P_3\) then becomes a connected digraph with two arcs. Hence each such digraph is isomorphic to exactly one of a chain, a collider, or a fork. Therefore, the corresponding collection of subdigraphs forms a chain--collider--fork decomposition of \(TT_n\).
\end{proof}

\bigskip

\section{Dots-in-cells diagram}
\label{S:sec3}

In this section, we establish the key technical tool on which the remainder of the paper relies. We adapt the adjacency matrix of the transitive tournament $TT_n$ (with vertices $v_1,\dots,v_n$ in topological order) to construct a \emph{dots-in-cells} diagram as follows. Consider an $(n-1)\times(n-1)$ array whose rows are labelled $1,2,\dots,n-1$ and whose columns are labelled $2,3,\dots,n$.   The cell in row $i$ and column $j$ is identified with the ordered pair $(i,j)$, and thus represents the arc $v_i \to v_j$. We place a dot in this cell precisely when $v_i$ dominates $v_j$, that is, when the arc $v_i\to v_j$ is present in $TT_n$ (equivalently, when $i<j$). Thus, each dot represents the arc from the row-vertex to the column-vertex. Each dot represents a unique arc of $TT_n$. Therefore, there are $\frac{n(n-1)}{2}$ dots in total. In each row $i$ for $i \in \{1,2 \ldots n-1\}$ there are exactly $n-i$ dots (in columns $i+1,i+2,\dots,n$).

Using the dots-in-cells diagram, we interpret the three two-arc motifs as follows.
Pick two dots that share a common row (say the $i$th row). These correspond to arcs $v_i\to v_j$ and $v_i\to v_k$ for some $j,k>i$, and hence form a \emph{fork} with middle vertex (common tail) $v_i$, namely $v_j \leftarrow v_i \rightarrow v_k$.

Pick two dots that share a common column (say the $j$th column). These correspond
to arcs $v_i\to v_j$ and $v_k\to v_j$ for some $i,k<j$, and hence form a
\emph{collider} with middle vertex (common head) $v_j$, namely
$v_i \rightarrow v_j \leftarrow v_k$.

Finally, pick two dots diagonally so that one lies strictly below and to the right of the other such that the column index of the left dot equals the row index of the right dot. For an example, choose dots in cells $(i,j)$ and $(j,k)$ with $i<j<k$. These correspond to arcs $v_i\to v_j$ and $v_j\to v_k$, and hence form a \emph{chain} $v_i \rightarrow v_j \rightarrow v_k$.

\begin{example} Dots-in-cells diagram of $TT_8$

\begin{figure}[ht]
\begin{tikzpicture}[x=1cm,y=1cm]

\def\n{7}
\pgfmathtruncatemacro{\nminusone}{\n-1}

\draw (0,0) rectangle (\n,\n);

\foreach \k in {1,...,\nminusone} {
  \draw (\k,0) -- (\k,\n);
  \draw (0,\k) -- (\n,\k);
}

\foreach \j in {1,...,\n} {
  \pgfmathtruncatemacro{\lab}{\j+1}
  \node at (\j-0.5,\n+0.4) {\lab};
}

\foreach \i in {1,...,\n} {
  \node at (-0.4,\n-\i+0.5) {\i};
}

\foreach \i/\j in {
  1/1,1/2,1/3,1/4,1/5,1/6,1/7,
  2/2,2/3,2/4,2/5,2/6,2/7,
  3/3,3/4,3/5,3/6,3/7,
  4/4,4/5,4/6,4/7,
  5/5,5/6,5/7,
  6/6,6/7,
  7/7
} {
  \fill (\j-0.5,\n-\i+0.5) circle (2pt);
}


\draw[black, thick, rotate around={-5:(4,5.5)}]
  (4,5.5) ellipse [x radius=0.85, y radius=0.30];

\draw[black, thick, rotate around={2:(6.5,4)}]
  (6.5,4) ellipse [x radius=0.30, y radius=0.85];

\draw[black, thick, rotate around={-45:(5,2)}]
  (5,2) ellipse [x radius=0.85, y radius=0.30];

\end{tikzpicture}
\caption{Dots-in-cells diagram of $TT_8$ illustrating three two-arc motifs: a fork, $(v_5 \leftarrow v_2 \rightarrow v_6)$, shown by the horizontal selection; a collider, $(v_3 \rightarrow v_8 \leftarrow v_4)$, shown by the vertical selection; and a chain, $(v_5 \rightarrow v_6 \rightarrow v_7)$, shown by the diagonal selection.}
\end{figure}    
\end{example}

We now establish that, for every admissible $n$, the transitive tournament $TT_n$ admits a chain--collider--fork decomposition with the claimed numbers of chains, colliders, and forks. In particular, this provides a packing of $TT_n$ into chains, colliders, and forks.

\begin{lemma}\label{L: chain collider fork decomposition}

For every admissible $n$, the transitive tournament $TT_n$ admits a 
chain--collider--fork decomposition with $\left\lfloor \frac{n-1}{2}\right\rfloor$ chains, $\left\lfloor \frac{n}{4}\right\rfloor$ colliders, and $\frac{n(n-1)}{4}-\left\lfloor \frac{n-1}{2}\right\rfloor - \left\lfloor \frac{n}{4}\right\rfloor$ forks.
\end{lemma}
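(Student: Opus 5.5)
The plan is to build the decomposition explicitly in the dots-in-cells diagram, using a \emph{row-parity} reduction. Since two dots in a common row form a fork, it is enough to choose the $\left\lfloor \frac{n-1}{2}\right\rfloor$ chains and $\left\lfloor \frac{n}{4}\right\rfloor$ colliders first. They must be pairwise arc-disjoint, and after their dots are deleted every row must have an even number of dots left. The remaining dots of each row are then paired arbitrarily into forks. The fork count is then forced, since the total number of motifs is $\frac{n(n-1)}{4}$.

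Deleting a chain $(i,j),(j,k)$ changes the parity of rows $i$ and $j$. Deleting a collider $(i,j),(k,j)$ changes the parity of rows $i$ and $k$. So each chosen motif flips exactly two rows, and the requirement is that the rows flipped an odd number of times are exactly the rows that start with an odd number of dots. Row $i$ has $n-i$ dots. Hence the initially odd rows are the $i\le n-1$ with $i\not\equiv n \pmod 2$, and there are $\left\lfloor \frac n2\right\rfloor$ of them.

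First I treat $n=4k$, where there are $2k-1$ chains and $k$ colliders, and the odd rows are $1,3,\dots,4k-1$.
\begin{itemize}
\item \emph{Chains.} Take the $2k-1$ chains $v_i\to v_{i+1}\to v_n$ for $i=1,\dots,2k-1$. They use the superdiagonal cells $(i,i+1)$ and the column-$n$ cells $(i+1,n)$, so they are arc-disjoint. Their parity flips telescope: every row $2,\dots,2k-1$ is flipped twice, so only rows $1$ and $2k$ change parity.
\item \emph{Colliders.} The rows that must still be flipped are now $2k,3,5,\dots,4k-1$, which is $2k$ rows. These are to be paired by $k$ colliders, each placed in a column that lies to the right of both of its rows and avoids the cells already used. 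The free cells include column $n$ in rows $1$ and $2k+1,\dots,n-1$, and column $n-1$ in most rows. So I would pair $3\leftrightarrow 5$, $7\leftrightarrow 9$, and so on, mostly in columns $n-1$ and $n$, and treat row $2k$ (paired with some odd row $>2k$) as a special collider in column $n-1$ or $n-2$.
\end{itemize}

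Then I treat $n=4k+1$, with $2k$ chains and $k$ colliders, where the odd rows are the even $i$ from $2$ to $4k$. The same telescoping family, now with $2k$ chains $v_i\to v_{i+1}\to v_n$, flips only rows $1$ and $2k+1$. This leaves the $2k+2$ rows $1,2k+1$ and the even rows $\ne 2k$ (if I have the parity right) to be matched by $k$ colliders. That count does not fit, so I expect to shift the starting index of the chain run, for example $i=2,\dots,2k+1$. The chains then flip two rows that are both in the odd set, cancelling them and leaving exactly $2k$ rows for the $k$ colliders.

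The step I expect to be the main obstacle is the collider matching. Each collider needs a common column to the right of both of its rows whose two cells are not already used by chains or by other colliders. This is delicate near the bottom of the diagram, where rows have very few dots. I would settle it by writing the colliders explicitly, for instance $v_{a}\to v_{n-1}\leftarrow v_{b}$ and $v_a\to v_n\leftarrow v_b$ for consecutive pairs of rows. I would check small cases ($n=4,5,8,9$) by hand; for example, for $n=4$ the chain $v_2\to v_3\to v_4$, the collider $v_1\to v_4\leftarrow v_2$ and the fork $v_2\leftarrow v_1\to v_3$ work. I would adjust the ends of the telescoping chain run if a conflict appears.

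Finally, I verify that every row has an even number of remaining dots and pair those dots into forks. The counts then agree with the statement of Lemma~\ref{L: chain collider fork decomposition}.
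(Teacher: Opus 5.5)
Your parity reduction is sound. Once chains and colliders are fixed so that every row has an even number of unused dots, the remaining dots pair into forks and the fork count is forced. The gap is the step you flag yourself, the collider matching, and the specific choices you make do not survive it. Your telescoping chains $v_i\to v_{i+1}\to v_n$ use up the column-$n$ cells that colliders need, while row $n-1$ can only ever be covered in column $n$.

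\begin{itemize}
\item \textbf{$n=4$.} The chain $v_1\to v_2\to v_4$ leaves rows $2,3$ odd. They can only meet in column $4$, where $(2,4)$ is already used. Your own $n=4$ example already abandons the run.
\item \textbf{$n=8$.} The odd rows after the chains are $3,4,5,7$. Row $7$'s only dot is $(7,8)$, and $(3,8),(4,8)$ are used by chains. This forces the collider $v_5\to v_8\leftarrow v_7$, so row $2k=4$ must pair with row $3<2k$, contrary to your plan.
\item \textbf{$n=4k+1$.} Your count is wrong. The shifted run flips rows $2$ and $2k+2$, both initially odd, leaving $2k-2$ odd rows, not $2k$. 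So the $k$ colliders cannot simply match them in pairs, and some row must lie in two colliders.
\item \textbf{$n=5$.} Here the approach fails outright. After $v_2\to v_3\to v_5$ and $v_3\to v_4\to v_5$ every row is even, and a single collider then creates two odd rows. The unshifted run leaves four odd rows for one collider. So no run of this shape works for $n=5$.
\end{itemize}

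The missing idea is to pick chains that flip adjacent rows and leave column $n$ almost untouched. Take the disjoint superdiagonal chains $v_{2m-1}\to v_{2m}\to v_{2m+1}$ for $1\le m\le\lfloor (n-1)/2\rfloor$. The rows with an odd number of unused dots are then $2,4,\dots,4k-2$ and $4k-1$ when $n=4k$, and $1,3,\dots,4k-1$ when $n=4k+1$. That is exactly $2\lfloor n/4\rfloor$ rows. Their column-$n$ cells are all unused, since these chains touch column $n$ only at $(4k,n)$ when $n=4k+1$. So all colliders can be taken in column $n$. This is the paper's argument. The paper phrases it as pairing each row from left to right into forks, with the leftover dots being exactly those column-$n$ cells.
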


\begin{proof}
Let $\{v_1,v_2, \ldots, v_n\}$ be the topological ordering of $TT_n$ for any admissible $n$. Then $(v_i,v_j) \in A(TT_n)$ where $i <j$.  
We first construct the chains, then the forks, and finally the colliders. Consider the main diagonal of the dots-in-cell diagram of $TT_n$ . Each dot in the main diagonal in the cell $(i,i+1)$ for $i \in \{1, \ldots, n-1\}$, representing the arc $v_i \to v_{i+1}$. If we pick two consecutive dots in the main diagonal, they represent a chain of the form $v_i \rightarrow v_{i+1} \rightarrow v_{i+2}$. We will pick such consecutive pairs starting from the dot in $(1,2)$. Dots-in-cells diagram of $TT_n$ is an $n-1 \times n-1 $ array. Therefore, there are $n-1$ dots in the main diagonal. Since we choose pairs of distinct dots, there will be $\left\lfloor \frac{n-1}{2}\right\rfloor$ chains starting from $v_1 \rightarrow v_{2} \rightarrow v_{3}$.

If $n-1$ is odd, then the final diagonal dot, namely the dot in the cell $(n-1,n)$ representing the arc $v_{n-1}\to v_n$, is left unused. If $n-1$ is even, then every dot on the main diagonal is used to form a chain.
Hence, after the chains have been selected, the remaining dots in each row can be described as follows. If $n$ is odd, then row $i$ contains $n-i-1$ remaining dots for each $1\le i\le n-1$. If $n$ is even, then row $i$ contains $n-i-1$ remaining dots for each $1\le i\le n-2$, while the single dot in the cell $(n-1,n)$ remains in row $n-1$.

Next we construct the forks as follows. For each row $i$, pair its remaining dots consecutively from left to right:
\[
(i,i+2)\ \text{with}\ (i,i+3), \quad (i,i+4)\ \text{with}\ (i,i+5), \quad \dots
\]
Each such pair corresponds to a fork with tail vertex $v_i$.

If the number of remaining dots in row $i$ is even, then all of them are used to form forks. If the number of remaining dots in row $i$ is odd, then exactly one dot remains unpaired; since the pairing is performed from left to right, this unpaired dot is the rightmost remaining dot in that row, namely $(i,n)$, representing the arc $v_i \to v_n$.

Let $U$ denote the set of dots left unpaired after all possible forks have been formed. Then
\[
U=
\begin{cases}
\{(j,n): 1\le j\le n-1,\ j \text{ is even}\} \cup \{(n-1,n)\}, & \text{if } n \text{ is even},\\[1ex]
\{(j,n): 1\le j\le n-1,\ j \text{ is odd}\}, & \text{if } n \text{ is odd}.
\end{cases}
\]
Therefore,
\[
|U|=
\begin{cases}
\dfrac{n}{2}, & \text{if } n \text{ is even},\\[1ex]
\dfrac{n-1}{2}, & \text{if } n \text{ is odd}.
\end{cases}
\]
Since $n$ is admissible, we have $n\equiv 0 \pmod 4$ when $n$ is even and $n\equiv 1 \pmod 4$ when $n$ is odd. Hence
\[
|U|=2\left\lfloor \frac{n}{4}\right\rfloor.
\]

Now partition $U$ into $|U|/2=\left\lfloor \frac{n}{4}\right\rfloor$ disjoint pairs. Each pair consists of two dots in column $n$, say $(i,n)$ and $(k,n)$, corresponding to the arcs $v_i\to v_n$ and $v_k\to v_n$. Thus each such pair defines a collider
\[
v_i\to v_n \leftarrow v_k.
\]
Therefore,
\[
\text{number of colliders}=\left\lfloor \frac{n}{4}\right\rfloor.
\]

Finally, each chain, collider, and fork uses exactly two arcs, and $TT_n$ has $\frac{n(n-1)}{2}$ arcs. Hence the total number of two-arc motifs in the decomposition is
\[
\frac{n(n-1)}{4}.
\]
Therefore,
\[
\text{number of forks}
=
\frac{n(n-1)}{4}
-\left\lfloor \frac{n-1}{2}\right\rfloor
-\left\lfloor \frac{n}{4}\right\rfloor,
\]
as required.

\end{proof}

\section{Chains}
\label{S:sec4}

In this section, we focus on decompositions and packings in which the connected two-edge digraph is isomorphic to a chain, namely a directed path of length two. We investigate how many arc-disjoint copies of a chain can be packed into the transitive tournament \(TT_n\), and hence determine the chain-packing number of \(TT_n\). For a chain of the form $v_i \to v_j \to v_k$, we call $v_i \to v_j$ the first arc and $v_j \to v_k$ the second arc of the chain.

\begin{lemma}\label{L: more chains in collider chains}

For every admissible $n$, the transitive tournament $TT_n$ admits a 
chain--collider--fork decomposition with $\frac{1}{2}\left(\left\lceil \frac{n+1}{2}\right\rceil -1\right)$ colliders, $\frac{n(n-1)}{4}
- \frac{1}{2}\left(\left\lceil \frac{n+1}{2}\right\rceil -1\right)$ chains and $0$ forks.

\end{lemma}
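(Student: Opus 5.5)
The plan is to build the decomposition explicitly in the dots-in-cells diagram. First note that for both admissible residues the collider count equals $\left\lfloor \frac n4\right\rfloor$. If $n=4k$ then $\left\lceil\frac{n+1}{2}\right\rceil=2k+1$, and if $n=4k+1$ then $\left\lceil\frac{n+1}{2}\right\rceil=2k+1$ as well; in both cases the formula gives $k$. So the target is: all arcs in chains except exactly $2\left\lfloor \frac n4\right\rfloor$ arcs, and these leftover arcs must pair off into colliders. The chain count $\frac{n(n-1)}{4}-k$ then follows automatically from the arc count, exactly as at the end of the proof of Lemma~\ref{L: chain collider fork decomposition}.

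For the chains I will use a local bookkeeping at each vertex. Every arc $v_i\to v_j$ is assigned to the \emph{middle vertex} of the motif containing it. That is either $v_j$ (first arc of a chain through $v_j$, or an arm of a collider at $v_j$) or $v_i$ (second arc of a chain through $v_i$). At $v_j$, let $a_j$ be the number of in-arcs assigned to $v_j$ and $b_j$ the number of out-arcs assigned to $v_j$. A local decomposition exists iff $b_j\le a_j$ and $a_j-b_j$ is even: pair $b_j$ in-arcs with the $b_j$ out-arcs to form chains, and pair the remaining in-arcs as colliders at $v_j$. The number of colliders is then $\sum_j (a_j-b_j)/2$, so the task reduces to choosing the assignment so that this sum equals $\left\lfloor \frac n4\right\rfloor$.

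Concretely, I would process the rows of the diagram from top to bottom. All arcs out of $v_1$ must be assigned to their heads, since $v_1$ has no in-arcs. Vertex $v_j$ has in-degree $j-1$ and out-degree $n-j$. For $j\le n/2$ out-degree dominates, so $v_j$ can absorb all its available in-arcs as first arcs of chains, and it passes its surplus out-arcs to their heads. Those heads lie in the later columns, where in-degree dominates and the surplus is consumed by taking out-arcs as second arcs. I would make the choices concrete in the diagram by pairing $(i,j)$ with $(j,j+(j-i))$ where possible, i.e.\ following the directed paths $v_r\to v_{r+d}\to v_{r+2d}\to\cdots$ of constant step $d$. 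Each such path of length $L$ splits into $\lfloor L/2\rfloor$ chains and leaves one arc when $L$ is odd. I would route these paths (reversing the pairing from the end where convenient) so that each odd leftover is the final arc of its path and ends at $v_n$, as in the proof of Lemma~\ref{L: chain collider fork decomposition}. Then all leftovers lie in column $n$ and pair into colliders at $v_n$.

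The main obstacle is the parity/counting step: showing that exactly $2\left\lfloor\frac n4\right\rfloor$ of the paths have odd length, and that all of their final arcs land in column $n$. If the constant-step paths do not give this directly, I would first try induction $n\to n+4$. Adjoin four new sink vertices $v_{n+1},\dots,v_{n+4}$ and decompose the $4n$ arcs from $\{v_1,\dots,v_n\}$ to the new vertices together with the $6$ arcs among the new vertices. Use chains $v_i\to v_{n+1}\to v_{n+2}$-type patterns and chains whose first arc is $v_i\to v_{n+a}$, adding exactly one new collider at $v_{n+4}$. This must be checked against the base cases $n=4,5$ by hand.
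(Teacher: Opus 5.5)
Your reduction is sound: charging each arc to the middle vertex of its motif and requiring $b_j\le a_j$ with $a_j-b_j$ even at every vertex is the right bookkeeping, and $2\lfloor n/4\rfloor$ is the correct number of leftover arcs. The gap is that you never produce an assignment, and the concrete plan you give cannot work.

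Pairing $(i,j)$ with $(j,2j-i)$ keeps every arc inside its constant-step path, and reversing the pairing does not change the parity of a path's length. For each step $d>(n-1)/2$, every one of the $n-d$ arcs of step $d$ is a path of length $1$ on its own. So the scheme leaves at least $\lfloor n/2\rfloor(\lfloor n/2\rfloor+1)/2$ unpaired arcs: in total $4$ for $n=4$ and $14$ for $n=8$, against the required $2$ and $4$. Their heads are also spread over many columns, not just column $n$. The long arcs have to be chained with arcs of a different length, which a constant-step scheme never does. So the statement you single out as the main obstacle is simply false for these paths.

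The induction fallback fails as stated too. You keep the decomposition of $TT_n$ and decompose only the $4n+6$ new arcs. Every chain among those arcs then has its middle vertex and its second arc inside $\{v_{n+1},\dots,v_{n+4}\}$, so at most $6$ new chains can arise. But $(n+4)(n+3)/4-n(n-1)/4-1=2n+2$ are needed. The old decomposition would have to be substantially rebuilt, and you give no mechanism for that.

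The missing idea is a specific assignment, and in your framework one works immediately: split the dots-in-cells diagram along the anti-diagonal.
For $j\le n-1$, charge $v_i\to v_j$ to its head if $i+j\le n$ and to its tail if $i+j\ge n+1$. Charge $v_i\to v_n$ to $v_i$ if $i\ge\lceil (n+1)/2\rceil$ and to $v_n$ otherwise. This gives:
\begin{itemize}
\item $a_t=b_t=\min\{t-1,n-t\}$ for $2\le t\le n-1$;
\item $a_1=b_1=0$;
\item $a_n=\lceil (n+1)/2\rceil-1=2\lfloor n/4\rfloor$ and $b_n=0$.
\end{itemize}
Your formula then yields exactly $\lfloor n/4\rfloor$ colliders, all centred at $v_n$.

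This is precisely the paper's construction. It uses chains $v_i\to v_t\to v_{i+t}$ ($1\le i\le n-t$) for $\lceil (n+1)/2\rceil\le t\le n-1$, and chains $v_i\to v_t\to v_{n-i}$ ($1\le i\le t-1$) for $2\le t<\lceil (n+1)/2\rceil$. The leftover arcs $v_i\to v_n$ with $i<\lceil (n+1)/2\rceil$ are paired into colliders at $v_n$.
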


\begin{proof}
Let $\{v_1,v_2, \ldots, v_n\}$ be the topological ordering of $TT_n$ for any admissible $n$. Then $(v_i,v_j) \in A(TT_n)$ where $i <j$. Let $v_i \rightarrow v_j \rightarrow v_k$ be a chain with $i <j < k$. Note that $v_j \in \{v_2,v_3, \ldots, v_{n-1}\}$. In other words, only $\{v_2,v_3, \ldots, v_{n-1}\}$ can be a centre vertex of a chain. Table~\ref{Tab:in degree out degree} lists the in-degrees and out-degrees of the vertices of $TT_n$ with respect to the topological ordering.

\begin{table}[h!]
    \centering
    \begin{tabular}{ccc}
        \hline
        Vertex & Out-degree  & In-degree \\
        \hline
        $v_n$ & $0$ & $n-1$ \\
        $v_{n-1}$ & $1$ & $n-2$ \\
        $v_{n-2}$  & $2$ & $n-3$ \\
        \vdots & \vdots & \vdots \\
        $v_{\left\lceil \frac{n+1}{2}\right\rceil}$ & $n-\left\lceil\frac{n+1}{2}\right\rceil$ & $\left\lceil\frac{n+1}{2}\right\rceil - 1$ \\
        \vdots & \vdots & \vdots \\
        $v_2$ & $n-2$ & $1$ \\
        $v_1$ & $n-1$ & $0$ \\
        
        \hline
    \end{tabular}
    \caption{In-degree and out-degree}
    \label{Tab:in degree out degree}
\end{table}

We now determine which vertices, under the topological ordering, have in-degree at least as large as their out-degree. For all $v_t \in V(TT_n)$, we have $\deg^{+}(v_t)=n-t$ and
\[
\deg^{-}(v_t)= (n-1)-\deg^{+}(v_t)=n-1-(n-t)=t-1,
 \quad \text{where } t \in [n].\]
Thus $\deg^{-}(v_t)\ge \deg^{+}(v_t)$ holds exactly when $t-1\ge n-t$, and hence when
\[
t\ge \left\lceil \frac{n+1}{2}\right\rceil .
\]

That is, for the vertices 
\[
v_n, v_{n-1},\, v_{n-2},\, \ldots,\, v_{\left\lceil \frac{n+1}{2}\right\rceil},
\]
the in-degree is greater than or equal to the out-degree. 
Conversely, for the vertices
\[
v_{1},\, v_{2},\, \ldots,\, v_{\left\lceil \frac{n+1}{2}\right\rceil-1},
\]
the in-degree is strictly less than the out-degree.

We first define the chains in the decomposition as follows. 
Let
\[
S_1=\{v_{n-1},v_{n-2},\ldots,v_{\lceil (n+1)/2\rceil}\}.
\]
For each $v_t\in S_1$, we have $\deg^{-}(v_t)\ge \deg^{+}(v_t)$, and hence there are sufficiently many in-neighbours available to pair with all out-neighbours of $v_t$. Accordingly, we choose $v_t$ as the centre of exactly $\deg^{+}(v_t)=n-t$ chains as follows.

Since
\[
N^+_{TT_n}(v_t)=\{v_{t+1},v_{t+2},\ldots,v_n\},
\]
the out-neighbours of $v_t$ are precisely
\[
v_t\to v_{t+1},\ v_t\to v_{t+2},\ \ldots,\ v_t\to v_n.
\]
These will serve as the second arcs of the chains. We pair them with $n-t$ distinct in-neighbours of $v_t$, which serve as the first arcs of the chains, chosen consecutively starting from $v_1$, namely
\[
v_1\to v_t,\ v_2\to v_t,\ \ldots,\ v_{n-t}\to v_t.
\]
This yields the chains
\[
v_1\to v_t\to v_{t+1},\quad
v_2\to v_t\to v_{t+2},\quad
\dots,\quad
v_{n-t}\to v_t\to v_n.
\]

Thus the chains in the decomposition of $TT_n$, where $v_t$ is the centre for $v_t \in S_1$, can be denoted as $v_i \to v_t \to v_{i+t}$ for $1\le i\le n-t$.

Now consider the set of vertices
\[
S_2=\{v_{\lceil (n+1)/2\rceil-1},\ldots,v_3,v_2\}.
\]
For each vertex $v_t\in S_2$, we have $\deg^{-}(v_t)<\deg^{+}(v_t)$, so $v_t$ has fewer in-neighbours than out-neighbours. We therefore make $v_t$ the centre of exactly $\deg^{-}(v_t)=t-1$ chains, so that every in-neighbour of $v_t$ is used as the first arc of some chain.

More precisely, for each $v_t\in S_2$,
\[
N^-_{TT_n}(v_t)=\{v_1,v_2,\dots,v_{t-1}\},
\]
so $|N^-_{TT_n}(v_t)|=t-1$. We pair these $t-1$ in-neighbours with $t-1$ distinct out-neighbours of $v_t$, chosen consecutively in descending order starting from $v_{n-1}$. Thus, we obtain the chains
\[
v_1\to v_t\to v_{n-1},\quad
v_2\to v_t\to v_{n-2},\quad
\dots,\quad
v_{t-1}\to v_t\to v_{n-(t-1)}.
\]

Therefore, the chains in the decomposition of $TT_n$, where $v_t$ is the centre for $v_t \in S_2$, can be denoted as $v_i \to v_t \to v_{n-i}$ for $1\le i\le t-1$.

We claim that the constructed chains are pairwise arc-disjoint. Indeed, consider any arc $v_i\to v_j$ with $1\le i<j\le n-1$.
If $v_j\in S_2$, then $i\le j-1$, and by construction all in-neighbours of $v_j$ are used exactly once as the first arcs of chains centred at $v_j$. Hence, $v_i\to v_j$ appears exactly once as the first arc.

Now suppose that $v_j\in S_1$. If $i\le n-j$, then by the construction for $S_1$, the arc $v_i\to v_j$ is used exactly once as the first arc of a chain centred at $v_j$. On the other hand, if $i>n-j$, then $j\ge n-i+1$, so the arc $v_i\to v_j$ is one of the out-neighbours selected in the construction for the centre $v_i$; hence it is used exactly once as a second arc of a chain centred at $v_i$.

Therefore, every arc $v_i\to v_j$ with $j<n$ belongs to exactly one constructed chain. It follows that the chains are pairwise arc-disjoint. The only arcs not used in any chain are those of the form $v_i\to v_n$ with $v_i\in S_2\cup\{v_1\}$.

Hence, the number of unused arcs is
\[
|S_2|+1=\left\lceil \frac{n+1}{2}\right\rceil -1.
\]
Since $n$ is admissible, we have $n\equiv 0 \text{ or } 1 \pmod{4}$. 
In both cases, the quantity 
\[
\left\lceil \frac{n+1}{2}\right\rceil -1
\]
is even. Therefore, these remaining arcs can be partitioned into pairs.

Each unused arc has the form $v_i \to v_n$, so any pair of such arcs 
$
v_i \to v_n 
\quad \text{and} \quad 
v_j \to v_n
$
shares the common head $v_n$. Hence each pair forms a collider $v_i \to v_n \leftarrow v_j$ with centre $v_n$.

Consequently, we obtain
\[
\frac{1}{2}\left(\left\lceil \frac{n+1}{2}\right\rceil -1\right)
\]
colliders, completing the construction. Therefore, the number of chains in the decomposition is
\[
\frac{n(n-1)}{4}
-
\frac{1}{2}\left(\left\lceil \frac{n+1}{2}\right\rceil -1\right).
\]

\end{proof}

\begin{example}
Table~\ref{tab:Chain--collider decomposition of $TT_8$} illustrates a chain--collider--fork decomposition of $TT_8$ with $12$ chains, $2$ colliders and $0$ forks. 

\begin{table}[h!]
    \centering
    \begin{tabular}{|c c|c|}
        \hline
        \multicolumn{2}{|c|}{Chains} & Colliders \\ \hline
        $v_1 \to v_7 \to v_8$ & $v_1 \to v_4 \to v_7 $ & $v_1 \to v_8 \leftarrow v_2$ \\ 
        $v_1 \to v_6 \to v_7$ & $v_2 \to v_4 \to v_6$ & $v_3 \to v_8 \leftarrow v_4$ \\ 
        $v_2 \to v_6 \to v_8$ & $v_3 \to v_4 \to v_5$ &  \\ 
        $v_1 \to v_5 \to v_6$ & $v_1 \to v_3 \to v_7$ &  \\ 
        $v_2 \to v_5 \to v_7$ & $v_2 \to v_3 \to v_6$ &  \\ 
        $v_3 \to v_5 \to v_8$ & $v_1 \to v_2 \to v_7$ &  \\ \hline
    \end{tabular}
    \caption{Chain--collider--fork decomposition of $TT_8$}
    \label{tab:Chain--collider decomposition of $TT_8$}
\end{table}

\end{example}

\begin{example}
Table~\ref{tab:Chain--collider decomposition of $TT_9$} illustrates a chain--collider--fork decomposition of $TT_9$ with $16$ chains, $2$ colliders and $0$ forks. 

\begin{table}[h!]
    \centering

    \begin{tabular}{|c c|c|}
        \hline
        \multicolumn{2}{|c|}{Chains} & Colliders \\ \hline
        $v_1 \to v_8 \to v_9$ & $v_1 \to v_4 \to v_8 $ & $v_1 \to v_9 \leftarrow v_2$ \\ 
        $v_1 \to v_7 \to v_8$ & $v_2 \to v_4 \to v_7$ & $v_3 \to v_9 \leftarrow v_4$ \\ 
        $v_2 \to v_7 \to v_9$ & $v_3 \to v_4 \to v_6$ &  \\ 
        $v_1 \to v_6 \to v_7$ & $v_1 \to v_3 \to v_8$ &  \\ 
        $v_2 \to v_6 \to v_8$ & $v_2 \to v_3 \to v_7$ &  \\ 
        $v_3 \to v_6 \to v_9$ & $v_1 \to v_2 \to v_8$ &  \\ 
        $v_1 \to v_5 \to v_6$ &  &  \\ 
        $v_2 \to v_5 \to v_7$ &  &  \\ 
        $v_3 \to v_5 \to v_8$ & &  \\ 
        $v_4 \to v_5 \to v_9$ &  &  \\ 
        \hline
    \end{tabular}
    \caption{Chain--collider--fork decomposition of $TT_9$}
    \label{tab:Chain--collider decomposition of $TT_9$}
\end{table}

\end{example}

\begin{theorem}
For every admissible $n$, the maximum number of chains in a chain-collider-fork decomposition of $TT_n$ is $\frac{n}{4}(n-2)$ when $n$ is even, and $\frac{(n-1)^2}{4}$ when $n$ is odd. 
\end{theorem}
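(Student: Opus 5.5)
The plan is to prove matching upper and lower bounds. The lower bound comes from Lemma~\ref{L: more chains in collider chains}, and the upper bound from a degree-counting argument at the centre vertex of each chain.

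\emph{Lower bound.} By Lemma~\ref{L: more chains in collider chains}, $TT_n$ has a chain--collider--fork decomposition with $\frac{n(n-1)}{4}-\frac{1}{2}\left(\left\lceil \frac{n+1}{2}\right\rceil -1\right)$ chains. We evaluate this in the two admissible cases.
\begin{itemize}
\item If $n\equiv 0 \pmod 4$, then $\left\lceil \frac{n+1}{2}\right\rceil-1=\frac n2$. This gives $\frac{n(n-1)}{4}-\frac n4=\frac{n(n-2)}{4}$ chains.
\item If $n\equiv 1\pmod 4$, then $\left\lceil \frac{n+1}{2}\right\rceil-1=\frac{n-1}{2}$. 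This gives $\frac{n(n-1)}{4}-\frac{n-1}{4}=\frac{(n-1)^2}{4}$ chains.
\end{itemize}
So the claimed values are attained.

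\emph{Upper bound.} Every chain $v_i\to v_t\to v_k$ has a unique centre $v_t$. It uses exactly one arc entering $v_t$ (its first arc) and one arc leaving $v_t$ (its second arc). Since the chains in a decomposition are arc-disjoint, the chains centred at $v_t$ use distinct in-arcs and distinct out-arcs of $v_t$. Using $\deg^-(v_t)=t-1$ and $\deg^+(v_t)=n-t$, as computed in the proof of Lemma~\ref{L: more chains in collider chains}, the number of chains centred at $v_t$ is at most $\min(t-1,n-t)$. Because each chain has exactly one centre, the total number of chains in any decomposition, and indeed in any chain packing, is at most
\[
\sum_{t=1}^{n}\min(t-1,n-t).
\]

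\emph{Evaluating the sum.} The sequence $\min(t-1,n-t)$ for $t=1,\dots,n$ is symmetric: it rises $0,1,2,\dots$ and falls back to $0$.
\begin{itemize}
\item For $n$ even, it is $0,1,\dots,\frac n2-1,\frac n2-1,\dots,1,0$. The sum is $2\cdot\frac{(n/2-1)(n/2)}{2}=\frac{n(n-2)}{4}$.
\item For $n$ odd, it is $0,1,\dots,\frac{n-3}{2},\frac{n-1}{2},\frac{n-3}{2},\dots,0$. The sum is $\frac{(n-3)(n-1)}{4}+\frac{n-1}{2}=\frac{(n-1)^2}{4}$.
\end{itemize}
These equal the lower bounds, which proves the theorem.

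There is no real obstacle here. The only point needing care is to state clearly that distinct chains with the same centre cannot share an in-arc or an out-arc of that centre, which is exactly arc-disjointness. The construction in Lemma~\ref{L: more chains in collider chains} is in fact tight at every vertex: each $v_t\in S_1$ is the centre of $n-t$ chains and each $v_t\in S_2$ of $t-1$ chains.
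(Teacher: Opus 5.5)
Your proof is correct and follows essentially the same route as the paper: the lower bound comes from Lemma~\ref{L: more chains in collider chains}, and the upper bound comes from bounding the chains at each centre $v_t$ by $\min\{\deg^-(v_t),\deg^+(v_t)\}$ via arc-disjointness. The only difference is that you evaluate $\sum_t \min(t-1,n-t)$ explicitly, whereas the paper concludes by noting that its construction attains this per-vertex bound at every possible centre, which amounts to the same thing.
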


\begin{proof}
By Lemma~\ref{L: more chains in collider chains}, there exists a chain--collider--fork decomposition of $TT_n$ with
\[
\frac{n(n-1)}{4}
-
\frac{1}{2}\left(\left\lceil \frac{n+1}{2}\right\rceil -1\right) \quad \text{chains.}
\]
Note that, \[
\frac{n(n-1)}{4}
-
\frac{1}{2}\left(\left\lceil \frac{n+1}{2}\right\rceil -1\right)
=
\begin{cases}
\frac{n}{4}(n-2), & \text{when $n$ is even},\\[1ex]
\frac{(n-1)^2}{4}, & \text{when $n$ is odd}.
\end{cases}
\]

It is sufficient to show that no such decomposition can contain more chains.
Let $\{v_1,v_2,\ldots,v_n\}$ be the topological ordering of $TT_n$, and define
\[
S_1=\{\,v_{n-1},\,v_{n-2},\,\ldots,\,v_{\lceil (n+1)/2\rceil}\,\},
\qquad
S_2=\{\,v_{\lceil (n+1)/2\rceil-1},\,\ldots,\,v_3,\,v_2\,\}.
\]
Note that neither $v_1$ nor $v_n$ can be the centre of a chain, since $\deg^{-}(v_1)=0$ and $\deg^{+}(v_n)=0$.
Hence every centre of a chain must belong to $S_1\cup S_2=\{v_2,v_3,\ldots,v_{n-1}\}$.

Now let $v_t\in S_1\cup S_2$. If $v_t\in S_1$, then $\deg^{-}(v_t)\geq \deg^{+}(v_t)$, so the number of chains centred at $v_t$ is at most $\deg^{+}(v_t)=n-t$, since each chain centred at $v_t$ uses a distinct out-neighbour of $v_t$. If $v_t\in S_2$, then $\deg^{-}(v_t)<\deg^{+}(v_t)$, so the number of chains centred at $v_t$ is at most $\deg^{-}(v_t)=t-1$, since each chain centred at $v_t$ uses a distinct in-neighbour of $v_t$.

Therefore, for every $v_t\in \{v_2,\ldots,v_{n-1}\}$, the number of chains with centre $v_t$ is at most
\[
\min\{\deg^{-}(v_t),\deg^{+}(v_t)\}
=
\begin{cases}
n-t, & \text{if } v_t\in S_1,\\
t-1, & \text{if } v_t\in S_2.
\end{cases}
\]
In Lemma~\ref{L: more chains in collider chains}, the construction attains exactly this bound at every possible centre vertex:
each $v_t\in S_1$ is the centre of exactly $n-t=\deg^{+}(v_t)$ chains, and each $v_t\in S_2$ is the centre of exactly $t-1=\deg^{-}(v_t)$ chains.

Hence no vertex can be the centre of more chains than in that construction, and so no chain--collider--fork decomposition of $TT_n$ can contain more chains overall. Thus the construction in Lemma~\ref{L: more chains in collider chains} is maximum.
\end{proof}

\begin{note}
The construction above shows that the chains are chosen optimally by considering the in-degrees and out-degrees of the vertices that can serve as the centre of a chain. Therefore, for any $n$, the chain-packing number of $TT_n$ is $n(n-2)/4$ when $n$ is even; $(n-1)^2/4$ when $n$ is odd.
\end{note}

\section{Colliders}
\label{S:sec5}

In this section, we focus on decompositions and packings in which the connected two-edge digraph is isomorphic to a collider. We investigate how many arc-disjoint copies of a collider can be packed into the transitive tournament \(TT_n\), and hence determine the collider-packing number of \(TT_n\). The constructions in this section are based on the dots-in-cells diagram of $TT_n$ mentioned in Section~\ref{S:sec3}.

\begin{lemma}\label{L: more collides in collider fork}
For every admissible $n$, the transitive tournament $TT_n$ admits a chain--collider--fork decomposition with $\left\lfloor \frac{n}{4}\right\rfloor$ forks, $\frac{n(n-1)}{4}-\left\lfloor \frac{n}{4}\right\rfloor$ colliders and $0$ chains.

\end{lemma}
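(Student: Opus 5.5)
The plan is to work entirely in the dots-in-cells diagram of Section~\ref{S:sec3}, where a collider is a pair of dots in a common column and a fork is a pair of dots in a common row. I will first pair up as many dots as possible column by column to form colliders. I will then arrange that all leftover dots lie in a single row, so that they can be paired into forks. Unlike Lemma~\ref{L: more chains in collider chains}, no degree analysis is needed; the construction is a direct parity count on columns.

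Let $\{v_1,\ldots,v_n\}$ be the topological ordering. Column $j$ (for $2\le j\le n$) contains the $j-1$ dots $(1,j),(2,j),\ldots,(j-1,j)$, which represent the in-arcs of $v_j$.
\begin{itemize}
\item If $j$ is odd, then $j-1$ is even. I pair the dots of column $j$ consecutively, $(1,j)$ with $(2,j)$, $(3,j)$ with $(4,j)$, and so on. Each such pair is a collider $v_i\to v_j\leftarrow v_{i+1}$.
\item If $j$ is even, then $j-1$ is odd. I deliberately leave the top dot $(1,j)$ unpaired and pair the remaining $j-2$ dots $(2,j),\ldots,(j-1,j)$ consecutively into colliders. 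For $j=2$ there is nothing to pair, and $(1,2)$ is simply left over.
\end{itemize}
This choice is the one real decision in the proof. Because every leftover dot is placed in row $1$, the leftovers are automatically compatible with forks.

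The set of unused dots is therefore $U=\{(1,j): 2\le j\le n,\ j \text{ even}\}$, so $|U|=\lfloor n/2\rfloor$. Admissibility now enters. If $n\equiv 0\pmod 4$, then $n/2$ is even; if $n\equiv 1\pmod 4$, then $\lfloor n/2\rfloor=(n-1)/2$ is even. In both cases $|U|=2\lfloor n/4\rfloor$. Since all dots of $U$ lie in row $1$, I pair them consecutively as $(1,2)$ with $(1,4)$, $(1,6)$ with $(1,8)$, and so on. Each pair gives a fork $v_j\leftarrow v_1\rightarrow v_k$, for $\lfloor n/4\rfloor$ forks in total.

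Finally I check that the construction is a decomposition. Every dot lies either in exactly one column pair or in $U$, and $U$ is fully paired, so every arc is used exactly once. No chains are used. Since the total number of two-arc motifs is $\frac{n(n-1)}{4}$, the number of colliders is $\frac{n(n-1)}{4}-\lfloor n/4\rfloor$. The only step requiring care is the parity check that $|U|$ is even, and this is exactly where the admissibility of $n$ is used.
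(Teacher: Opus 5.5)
Your proposal is correct and follows essentially the paper's proof. You pair the dots within each column into colliders so that the only leftovers are the row-$1$ dots $(1,j)$ with $j$ even. By admissibility there are $2\lfloor n/4\rfloor$ of these, and they pair into forks centred at $v_1$. Your top-down pairing in fact gives exactly the same pairs as the paper's bottom-to-top pairing in every column, so the two constructions coincide.
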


\begin{proof}
Consider the dots-in-cells diagram of $TT_n$, where each dot represents a unique arc of $TT_n$.
Thus there are $\frac{n(n-1)}{2}$ dots in total, and each column $i\in\{2,3,\dots,n\}$ contains exactly $i-1$ dots,
corresponding to the arcs $(1,i),(2,i),\dots,(i-1,i)$.

For each column $i$, pair its dots consecutively from bottom to top:
\[
(i-1,i)\ \text{with}\ (i-2,i),\quad (i-3,i)\ \text{with}\ (i-4,i),\ \dots
\]
Each such pair corresponds to a collider with centre vertex $i$.
If $i$ is odd then $i-1$ is even, so column $i$ is fully paired and contributes only colliders.
If $i$ is even then $i-1$ is odd, so exactly one dot remains unpaired in column $i$; since we paired from bottom to top,
this unpaired dot is the topmost one, namely $(1,i)$.
Hence the set of unpaired dots is precisely
\[
U=\{(1,i):\ i\ \text{even}\}.
\]
There are $\lfloor n/2\rfloor$ dots in $U$. Since $n$ is admissible, that is $n \equiv 0,1 \mod{4}$, $\lfloor n/2\rfloor$ is even, so we can partition $U$ into
$\lfloor n/2\rfloor/2$ pairs. Each such pair corresponds to a fork with middle vertex $v_1$.
Therefore,
\[
\text{number of forks}=\frac{1}{2}\left\lfloor \frac{n}{2}\right\rfloor=\left\lfloor \frac{n}{4}\right\rfloor.
\]

Thus, every arc of \(TT_n\) has been used exactly once in the construction, and each arc belongs to precisely one motif, either a fork or a collider. Since each such motif contains exactly two arcs, while \(TT_n\) has $\frac{n(n-1)}{2}$ arcs,

\[
\text{number of colliders}=\frac{n(n-1)}{4}-\left\lfloor \frac{n}{4}\right\rfloor,
\]
as required.
Further, this simplified to $\frac{n}{4}(n-2)$ when $n$ is even, and $\frac{(n-1)^2}{4}$ when $n$ is odd.
\end{proof}

\begin{example}
Table~\ref{tab:collider--fork decomposition of $TT_8$} illustrates a chain--collider--fork decomposition of $TT_8$ with $12$ colliders, $2$ forks and $0$ chains. 

\begin{figure}[ht]
\begin{tikzpicture}[x=1cm,y=1cm]

\def\n{7}
\pgfmathtruncatemacro{\nminusone}{\n-1}

\draw (0,0) rectangle (\n,\n);

\foreach \k in {1,...,\nminusone} {
  \draw (\k,0) -- (\k,\n);
  \draw (0,\k) -- (\n,\k);
}

\foreach \j in {1,...,\n} {
  \pgfmathtruncatemacro{\lab}{\j+1}
  \node at (\j-0.5,\n+0.4) {\lab};
}

\foreach \i in {1,...,\n} {
  \node at (-0.4,\n-\i+0.5) {\i};
}

\foreach \i/\j in {
  1/1,1/2,1/3,1/4,1/5,1/6,1/7,
  2/2,2/3,2/4,2/5,2/6,2/7,
  3/3,3/4,3/5,3/6,3/7,
  4/4,4/5,4/6,4/7,
  5/5,5/6,5/7,
  6/6,6/7,
  7/7
} {
  \fill (\j-0.5,\n-\i+0.5) circle (2pt);
}

\foreach \i/\j/\ang in {
  1/2/5,
  2/3/-8,
  1/4/-2,
  2/5/6,
  1/6/2,
  2/7/-5,
  3/4/0,
  3/6/3,
  4/5/-7,
  4/7/0,
  5/6/4,
  6/7/-4
} {
  \draw[thick, rotate around={\ang:(\j-0.5,\n-\i)}]
    (\j-0.5,\n-\i) ellipse [x radius=0.33, y radius=0.80];
}

\end{tikzpicture}
\caption{A dots-in-cells diagram for \(TT_8\), illustrating the construction of colliders.}
\end{figure}

\begin{table}[ht]
    \centering
    
    \label{tab:placeholder_label}
    \begin{tabular}{|c c|c|}
        \hline
        \multicolumn{2}{|c|}{Colliders} & Forks \\ \hline
        $v_1 \to v_3 \leftarrow v_2$ & $v_1 \to v_7 \leftarrow v_2$ & $v_2 \leftarrow v_1 \to v_4$ \\ 
        $v_2 \to v_4 \leftarrow v_3$ & $v_3 \to v_7 \leftarrow v_4$ & $v_6 \leftarrow v_1 \to v_8$ \\ 
        $v_1 \to v_5 \leftarrow v_2$ & $v_5 \to v_7 \leftarrow v_6$ &  \\ 
        $v_3 \to v_5 \leftarrow v_4$ & $v_2 \to v_8 \leftarrow v_3$ &  \\ 
        $v_2 \to v_6 \leftarrow v_3$ & $v_4 \to v_8 \leftarrow v_5$ &  \\ 
        $v_4 \to v_6 \leftarrow v_5$ & $v_6 \to v_8 \leftarrow v_7$ &  \\ \hline
    \end{tabular}
    \caption{Chain--collider--fork decomposition of $TT_8$}
    \label{tab:collider--fork decomposition of $TT_8$}
\end{table}

\end{example}

\newpage

\begin{example}
Table~\ref{tab:collider--fork decomposition of $TT_9$} illustrates a chain--collider--fork decomposition of $TT_9$ with $16$ colliders, $2$ forks and $0$ chains. 

\begin{figure}[ht]
\begin{tikzpicture}[x=1cm,y=1cm]

\def\ncols{8}
\def\nrows{8}

\draw (0,0) rectangle (\ncols,\nrows);

\foreach \k in {1,...,7} {
  \draw (\k,0) -- (\k,\nrows);
}

\foreach \k in {1,...,7} {
  \draw (0,\k) -- (\ncols,\k);
}

\foreach \j in {1,...,8} {
  \pgfmathtruncatemacro{\lab}{\j+1}
  \node at (\j-0.5,\nrows+0.4) {\lab};
}

\foreach \i in {1,...,8} {
  \node at (-0.4,\nrows-\i+0.5) {\i};
}

\foreach \i/\j in {
  1/1,1/2,1/3,1/4,1/5,1/6,1/7,1/8,
  2/2,2/3,2/4,2/5,2/6,2/7,2/8,
  3/3,3/4,3/5,3/6,3/7,3/8,
  4/4,4/5,4/6,4/7,4/8,
  5/5,5/6,5/7,5/8,
  6/6,6/7,6/8,
  7/7,7/8,
  8/8
} {
  \fill (\j-0.5,\nrows-\i+0.5) circle (2pt);
}

\foreach \i/\j/\ang in {
  1/2/5,
  2/3/-8,
  1/4/-2,
  3/4/0,
  2/5/6,
  4/5/0,
  1/6/2,
  3/6/0,
  5/6/-3,
  2/7/-5,
  4/7/0,
  6/7/-2,
  1/8/2,
  3/8/0,
  5/8/3,
  7/8/-3
} {
  \draw[thick, rotate around={\ang:(\j-0.5,\nrows-\i)}]
    (\j-0.5,\nrows-\i) ellipse [x radius=0.33, y radius=0.80];
}

\end{tikzpicture}
\caption{A dots-in-cells diagram for \(TT_9\), illustrating the construction of colliders.}
\end{figure}

\begin{table}[h!]
    \centering
    \begin{tabular}{|c c|c|}
        \hline
        \multicolumn{2}{|c|}{Colliders} & Forks \\ \hline
        $v_1 \to v_3 \leftarrow v_2$ & $v_5 \to v_7 \leftarrow v_6$ & $v_2 \leftarrow v_1 \to v_4$ \\ 
        $v_2 \to v_4 \leftarrow v_3$ & $v_2 \to v_8 \leftarrow v_3$ & $v_6 \leftarrow v_1 \to v_8$ \\ 
        $v_1 \to v_5 \leftarrow v_2$ & $v_4 \to v_8 \leftarrow v_5$ & \\ 
        $v_3 \to v_5 \leftarrow v_4$ & $v_6 \to v_8 \leftarrow v_7$ & \\ 
        $v_2 \to v_6 \leftarrow v_3$ & $v_1 \to v_9 \leftarrow v_2$ & \\ 
        $v_4 \to v_6 \leftarrow v_5$ & $v_3 \to v_9 \leftarrow v_4$ & \\ 
        $v_1 \to v_7 \leftarrow v_2$ & $v_5 \to v_9 \leftarrow v_6$ & \\ 
        $v_3 \to v_7 \leftarrow v_4$ & $v_7 \to v_9 \leftarrow v_8$ & \\ \hline
    \end{tabular}
    \caption{Chain--collider--fork decomposition of $TT_9$}
    \label{tab:collider--fork decomposition of $TT_9$}
\end{table}
    
\end{example}

\begin{theorem}
For every admissible $n$, the maximum number of colliders in a chain-collider-fork decomposition of $TT_n$ is $\frac{n}{4}(n-2)$ when $n$ is even, and $\frac{(n-1)^2}{4}$ when $n$ is odd.
\end{theorem}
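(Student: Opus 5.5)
Lower bound: Lemma~\ref{L: more collides in collider fork} already gives a chain--collider--fork decomposition of $TT_n$ with $\frac{n(n-1)}{4}-\lfloor n/4\rfloor$ colliders. For $n=4m$ this is $\frac{n(n-1)}{4}-\frac{n}{4}=\frac{n(n-2)}{4}$. For $n=4m+1$ it is $\frac{n(n-1)}{4}-\frac{n-1}{4}=\frac{(n-1)^2}{4}$. So the claimed number of colliders is attained, and it remains to show that no decomposition contains more.

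Upper bound: count colliders by their centre, as in the chain section. A collider $v_i\to v_j\leftarrow v_k$ consists of two distinct arcs into its centre $v_j$. So the number of colliders centred at $v_j$ is at most
\[
\left\lfloor \frac{\deg^-(v_j)}{2}\right\rfloor=\left\lfloor \frac{j-1}{2}\right\rfloor .
\]
This holds in any packing, not only in decompositions, since colliders in a packing are arc-disjoint. Summing over $j=1,\dots,n$ gives
\[
\sum_{j=1}^{n}\left\lfloor \frac{j-1}{2}\right\rfloor=\sum_{s=0}^{n-1}\left\lfloor \frac{s}{2}\right\rfloor .
\]
A routine evaluation of this sum gives $\frac{(n-1)^2}{4}$ for $n$ odd and $\frac{n(n-2)}{4}$ for $n$ even. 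The quickest check is by pairing consecutive terms: $\lfloor (2r)/2\rfloor+\lfloor (2r+1)/2\rfloor=2r$. This matches the construction, so the bound is sharp.

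In dots-in-cells terms, column $j$ has $j-1$ dots. Each collider uses two dots from the same column, so each even column (odd number of dots) wastes at least one dot. The construction of Lemma~\ref{L: more collides in collider fork} wastes exactly the top dot $(1,j)$ in each such column, and these leftover dots are then paired into forks.

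The only step needing care is the floor sum. Since $n\equiv 0,1 \pmod 4$, the number of odd values of $j-1$ is $\lfloor n/2\rfloor$, which is even. So the count can equivalently be written as $\frac{1}{2}\bigl(\frac{n(n-1)}{2}-\lfloor n/2\rfloor\bigr)$, which agrees with the lemma's $\frac{n(n-1)}{4}-\lfloor n/4\rfloor$. As a corollary, the same argument shows that for every $n$ the collider-packing number of $TT_n$ equals $\sum_{s=0}^{n-1}\lfloor s/2\rfloor$, paralleling the note after the chain theorem.
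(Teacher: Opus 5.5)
Your proposal is correct and takes essentially the same route as the paper. The upper bound comes from capping the number of colliders centred at $v_j$ by $\lfloor (j-1)/2\rfloor$ and summing, and tightness comes from the construction in Lemma~\ref{L: more collides in collider fork}. Your pairing-of-consecutive-terms evaluation of the floor sum is a tidier version of the paper's calculation, which starts the sum at $j=3$ because the terms for $j=1,2$ are zero.
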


\begin{proof}
Let $V(TT_n)=\{v_1,v_2,\ldots,v_n\}$ be the topological ordering of $TT_n$. With this ordering, a collider has the form $v_j \rightarrow v_i \leftarrow v_k$, for $i > j,k$. Therefore, $\deg^-(v_i) \geq 2$.
Hence, only the vertices $v_3,\ldots,v_{n-2}, v_{n-1},v_n$ can occur as the middle vertex of a collider.
For each $v_i \in \{v_3,\ldots,v_{n-2},v_{n-1},v_n\}$, $\deg^-(v_i) = i-1$. Therefore, the number of colliders with the middle vertex $v_i$ is at most $\left\lfloor \frac{i-1}{2}\right\rfloor$.
Thus, the maximum possible number of colliders is:
\[
\text{total colliders}= \sum_{i=3}^{n}\left\lfloor \frac{i-1}{2}\right\rfloor =
\begin{cases}
\displaystyle
\sum_{i=3}^{n}\frac{i-1}{2}\;-\;\frac{1}{2}\cdot\frac{n-2}{2}
\;=\;\frac{n(n-2)}{4},
& \text{if $n$ is even},\\[2ex]
\displaystyle
\sum_{i=3}^{n}\frac{i-1}{2}\;-\;\frac{1}{2}\cdot\frac{n-3}{2}
\;=\;\frac{(n-1)^2}{4},
& \text{if $n$ is odd}.
\end{cases}
\]

In Lemma~\ref{L: more collides in collider fork}, we constructed  a chain--collider-fork decomposition of $TT_n$ with $\frac{n}{4}(n-2)$ colliders when $n$ is even and $\frac{(n-1)^2}{4}$ when $n$ is odd. Thus, the above upper bounds are tight, proving the claim.     \end{proof}

\begin{note}
The construction above shows that the colliders are chosen optimally by selecting all possible pairs of dots in each column of the dots-in-cells diagram. Therefore, for any $n$, the collider-packing number of $TT_n$ is $n(n-2)/4$ when $n$ is even; $(n-1)^2/4$ when $n$ is odd.
\end{note}

\section{Forks}
\label{S:sec6}

In this section, we focus on decompositions and packings in which the connected two-edge digraph is isomorphic to a fork. We investigate how many arc-disjoint copies of a fork can be packed into the transitive tournament \(TT_n\), and hence determine the fork-packing number of \(TT_n\). The constructions in this section are based on the dots-in-cells diagram of $TT_n$ mentioned in Section~\ref{S:sec3}.

\begin{lemma}\label{L: more forks in collider fork}
For every admissible $n$, the transitive tournament $TT_n$ admits a chain--collider--fork decomposition with $\left\lfloor \frac{n}{4}\right\rfloor$ colliders, $\frac{n(n-1)}{4}-\left\lfloor \frac{n}{4}\right\rfloor$ forks and $0$ chains.

\end{lemma}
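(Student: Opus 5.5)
The construction mirrors Lemma~\ref{L: more collides in collider fork}, with rows in place of columns: pair the dots within each row of the dots-in-cells diagram to form forks, then pair the leftover dots, which will all lie in a single column, into colliders. Row $i$ for $i\in\{1,\dots,n-1\}$ contains exactly $n-i$ dots, in the cells $(i,i+1),\dots,(i,n)$.

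\textbf{Step 1: forks.} In each row $i$, pair the dots consecutively from left to right:
\[
(i,i+1)\text{ with }(i,i+2),\quad (i,i+3)\text{ with }(i,i+4),\ \dots
\]
Each pair shares the row index $i$, so it represents a fork $v_j\leftarrow v_i\to v_k$ centred at $v_i$. If $n-i$ is even, the row is used up completely. If $n-i$ is odd, exactly one dot is left over, and because we pair from the left it is the rightmost dot $(i,n)$. The unpaired set is therefore
\[
U=\{(i,n): 1\le i\le n-1,\ n-i \text{ odd}\}.
\]

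\textbf{Step 2: count $U$.} We need $|U|=2\lfloor n/4\rfloor$.
\begin{itemize}
\item If $n\equiv 0\pmod 4$, then $n-i$ is odd exactly when $i$ is odd, so $|U|=n/2$.
\item If $n\equiv 1\pmod 4$, then $n-i$ is odd exactly when $i$ is even with $i\le n-1$, so $|U|=(n-1)/2$.
\end{itemize}
In both cases $|U|=2\lfloor n/4\rfloor$, which is even.

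\textbf{Step 3: colliders.} All dots of $U$ lie in column $n$, so any two of them, $(i,n)$ and $(k,n)$, form a collider $v_i\to v_n\leftarrow v_k$. Pairing $U$ arbitrarily gives $\lfloor n/4\rfloor$ colliders.

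\textbf{Step 4: conclusion.} Every dot has now been used exactly once. No chains have been formed, and the total number of motifs is $n(n-1)/4$. Hence there are $\frac{n(n-1)}{4}-\lfloor n/4\rfloor$ forks, as claimed.

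The only delicate point is the parity bookkeeping in Step~2. The admissibility condition $n\equiv 0,1\pmod 4$ is used exactly there, to make $|U|$ even.
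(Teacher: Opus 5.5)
Your proposal is correct and follows the paper's proof essentially step for step. You pair each row's dots left to right into forks centred at $v_i$, identify the leftovers as exactly the dots $(i,n)$ with $n-i$ odd, use admissibility to get $|U|=2\lfloor n/4\rfloor$, and pair these into colliders at $v_n$. Your explicit case check of $|U|$ for $n\equiv 0$ and $n\equiv 1 \pmod{4}$ only spells out what the paper states in one line.
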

\begin{proof}
Consider the dots-in-cells diagram of $TT_n$, where each dot represents a unique arc of $TT_n$.
Thus there are $\frac{n(n-1)}{2}$ dots in total, and each row $i\in\{1,2,3,\dots,n-1\}$ contains exactly $n-i$ dots,
corresponding to the arcs $(i,i+1),(i,i+2),\dots,(i,n)$.
For each row $i$, pair its dots consecutively from left to right:
\[
(i,i+1)\ \text{with}\ (i,i+2),\quad (i,i+3)\ \text{with}\ (i,i+4),\ \dots
\]
Each such pair corresponds to a fork with middle (tail) vertex $v_i$.
If $n-i$ is even then row $i$ is fully paired and contributes only forks.
If $n-i$ is odd then exactly one dot remains unpaired in row $i$; since we paired from left to right,
this unpaired dot is the rightmost one, namely $(i,n)$.
Hence the set of unpaired dots is precisely
\[
U=\{(i,n):\ n-i\ \text{odd}\}.
\]
Equivalently, $U$ consists of the dots in column $n$ coming from rows $i$ of parity opposite to $n$.
Therefore,
\[
|U|=
\begin{cases}
\frac{n}{2}, & \text{if $n$ is even},\\[0.5ex]
\frac{n-1}{2}, & \text{if $n$ is odd},
\end{cases}
\qquad\text{so}\qquad
|U|=2\left\lfloor\frac{n}{4}\right\rfloor,
\]
where the last equality uses admissibility (so $n\equiv 0\pmod4$ if $n$ is even, and $n\equiv 1\pmod4$ if $n$ is odd).
Partition $U$ into $|U|/2=\lfloor n/4\rfloor$ disjoint pairs.
Each pair consists of two dots in the column $n$, corresponding to arcs $v_i\to v_n$ and $v_k\to v_n$ with common head $v_n$,
and hence defines a collider $v_i\to v_n \leftarrow v_k$.
Thus,
\[
\text{number of colliders}=\left\lfloor \frac{n}{4}\right\rfloor.
\]

Thus, every arc of \(TT_n\) has been used exactly once in the construction, and each arc belongs to precisely one motif, either a fork or a collider. Since each such motif contains exactly two arcs, while \(TT_n\) has $\frac{n(n-1)}{2}$ arcs,

\[
\text{number of forks}=\frac{n(n-1)}{4}-\left\lfloor \frac{n}{4}\right\rfloor,
\]
as required.
Further, this simplified to $\frac{n}{4}(n-2)$ when $n$ is even, and $\frac{(n-1)^2}{4}$ when $n$ is odd.
\end{proof}

\begin{example}
Table~\ref{tab:fork decomposition of $TT_8$} illustrates a chain--collider--fork decomposition of $TT_8$ with $12$ forks, $2$ colliders and $0$ chains.

\begin{figure}[ht]

\begin{tikzpicture}[x=1cm,y=1cm]

\def\n{7}
\pgfmathtruncatemacro{\nminusone}{\n-1}

\draw (0,0) rectangle (\n,\n);

\foreach \k in {1,...,\nminusone} {
  \draw (\k,0) -- (\k,\n);
  \draw (0,\k) -- (\n,\k);
}

\foreach \j in {1,...,\n} {
  \pgfmathtruncatemacro{\lab}{\j+1}
  \node at (\j-0.5,\n+0.4) {\lab};
}

\foreach \i in {1,...,\n} {
  \node at (-0.4,\n-\i+0.5) {\i};
}

\foreach \i/\j in {
  1/1,1/2,1/3,1/4,1/5,1/6,1/7,
  2/2,2/3,2/4,2/5,2/6,2/7,
  3/3,3/4,3/5,3/6,3/7,
  4/4,4/5,4/6,4/7,
  5/5,5/6,5/7,
  6/6,6/7,
  7/7
} {
  \fill (\j-0.5,\n-\i+0.5) circle (2pt);
}

\foreach \i/\j/\ang in {
  1/1/0,
  1/3/0,
  1/5/0,
  2/2/0,
  2/4/0,
  2/6/0,
  3/3/0,
  3/5/0,
  4/4/0,
  4/6/0,
  5/5/0,
  6/6/0
} {
  \draw[thick, rotate around={\ang:(\j,\n-\i+0.5)}]
    (\j,\n-\i+0.5) ellipse [x radius=0.80, y radius=0.33];
}

\end{tikzpicture}
\caption{A dots-in-cells diagram for \(TT_8\), illustrating the construction of forks.}
\end{figure}

\begin{table}[h!]
    \centering
    
    \label{tab:placeholder_label}
    \begin{tabular}{|c c|c|}
        \hline
        \multicolumn{2}{|c|}{Forks} & Colliders \\ \hline
        $v_2 \leftarrow v_1 \rightarrow v_3$ & $v_4 \leftarrow v_1 \rightarrow v_5$ & $v_1 \to v_8 \leftarrow v_3$ \\ 
        $v_6 \leftarrow v_1 \rightarrow v_7$ & $v_3 \leftarrow v_2 \rightarrow v_4$ & $v_5 \to v_8 \leftarrow v_7$ \\ 
        $v_5 \leftarrow v_2 \rightarrow v_6$ & $v_7 \leftarrow v_2 \rightarrow v_8$ &  \\ 
        $v_4 \leftarrow v_3 \rightarrow v_5$ & $v_6 \leftarrow v_3 \rightarrow v_7$ &  \\ 
        $v_5 \leftarrow v_4 \rightarrow v_6$ & $v_7 \leftarrow v_4 \rightarrow v_8$ &  \\ 
        $v_6 \leftarrow v_5 \rightarrow v_7$ & $v_7 \leftarrow v_6 \rightarrow v_8$ &  \\ \hline
    \end{tabular}
     \caption{Chain--collider--fork decomposition of $TT_8$}
     \label{tab:fork decomposition of $TT_8$}
\end{table}

\end{example}

\newpage

\begin{example}
Table~\ref{tab:fork decomposition of $TT_9$} illustrates a chain--collider--fork decomposition of $TT_9$ with $16$ forks, $2$ colliders and $0$ chains. 

\begin{figure}[ht]
\begin{tikzpicture}[x=1cm,y=1cm]

\def\n{8}
\pgfmathtruncatemacro{\nminusone}{\n-1}

\draw (0,0) rectangle (\n,\n);

\foreach \k in {1,...,\nminusone} {
  \draw (\k,0) -- (\k,\n);
  \draw (0,\k) -- (\n,\k);
}

\foreach \j in {1,...,\n} {
  \pgfmathtruncatemacro{\lab}{\j+1}
  \node at (\j-0.5,\n+0.4) {\lab};
}

\foreach \i in {1,...,\n} {
  \node at (-0.4,\n-\i+0.5) {\i};
}

\foreach \i/\j in {
  1/1,1/2,1/3,1/4,1/5,1/6,1/7,1/8,
  2/2,2/3,2/4,2/5,2/6,2/7,2/8,
  3/3,3/4,3/5,3/6,3/7,3/8,
  4/4,4/5,4/6,4/7,4/8,
  5/5,5/6,5/7,5/8,
  6/6,6/7,6/8,
  7/7,7/8,
  8/8
} {
  \fill (\j-0.5,\n-\i+0.5) circle (2pt);
}

\foreach \i/\j/\ang in {
  1/1/0,
  1/3/0,
  1/5/0,
  1/7/0,
  2/2/0,
  2/4/0,
  2/6/0,
  3/3/0,
  3/5/0,
  3/7/0,
  4/4/0,
  4/6/0,
  5/5/0,
  5/7/0,
  6/6/0,
  7/7/0
} {
  \draw[thick, rotate around={\ang:(\j,\n-\i+0.5)}]
    (\j,\n-\i+0.5) ellipse [x radius=0.80, y radius=0.33];
}

\end{tikzpicture}
\caption{A dots-in-cells diagram for \(TT_9\), illustrating the construction of forks.}
\end{figure}

\begin{table}[h!]
    \centering
    \begin{tabular}{|c c|c|}
        \hline
        \multicolumn{2}{|c|}{Forks} & Colliders \\ \hline
        $v_2 \leftarrow v_1 \rightarrow v_3$ & $v_5 \leftarrow v_3 \rightarrow v_6$ & $v_2 \to v_9 \leftarrow v_4$ \\ 
        $v_4 \leftarrow v_1 \rightarrow v_5$ & $v_7 \leftarrow v_3 \rightarrow v_8$ & $v_6 \to v_9 \leftarrow v_8$ \\ 
        $v_6 \leftarrow v_1 \rightarrow v_7$ & $v_6 \leftarrow v_4 \rightarrow v_7$ &  \\ 
        $v_8 \leftarrow v_1 \rightarrow v_9$ & $v_8 \leftarrow v_4 \rightarrow v_9$ &  \\ 
        $v_3 \leftarrow v_2 \rightarrow v_4$ & $v_7 \leftarrow v_5 \rightarrow v_8$ &  \\ 
        $v_5 \leftarrow v_2 \rightarrow v_6$ & $v_8 \leftarrow v_5 \rightarrow v_9$ &  \\ 
        $v_7 \leftarrow v_2 \rightarrow v_8$ & $v_8 \leftarrow v_6 \rightarrow v_7$ &  \\ 
        $v_4 \leftarrow v_3 \rightarrow v_5$ & $v_8 \leftarrow v_7 \rightarrow v_9$ &  \\ \hline
    \end{tabular}
     \caption{Chain--collider--fork decomposition of $TT_9$}
     \label{tab:fork decomposition of $TT_9$}
\end{table}

\end{example}

\begin{theorem}
For every admissible $n$, the maximum number of forks in a chain-collider-fork decomposition of $TT_n$ is $\frac{n}{4}(n-2)$ when $n$ is even, and $\frac{(n-1)^2}{4}$ when $n$ is odd.

\end{theorem}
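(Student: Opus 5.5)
The argument mirrors the one used for colliders, with rows of the dots-in-cells diagram playing the role that columns played there. First I will derive an upper bound on the number of forks from out-degrees. Then I will show that the construction of Lemma~\ref{L: more forks in collider fork} attains it.

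For the upper bound, fix the topological ordering $v_1,\dots,v_n$. A fork has the form $v_j \leftarrow v_i \rightarrow v_k$ with $i<j,k$, so its middle vertex $v_i$ satisfies $\deg^+(v_i)=n-i\ge 2$. Hence only $v_1,\dots,v_{n-2}$ can be the middle vertex of a fork. Each fork centred at $v_i$ uses two distinct out-arcs of $v_i$, and the forks are arc-disjoint. So the number of forks centred at $v_i$ is at most $\lfloor (n-i)/2\rfloor$. In the dots-in-cells picture this says that row $i$ holds $n-i$ dots, and forks with middle vertex $v_i$ are pairs of dots in row $i$. Summing over the possible centres, any chain--collider--fork decomposition has at most
\[
\sum_{i=1}^{n-2}\left\lfloor \frac{n-i}{2}\right\rfloor=\sum_{m=2}^{n-1}\left\lfloor \frac{m}{2}\right\rfloor
\]
forks, where $m=n-i$.

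Next I will evaluate this sum. It equals the sum $\sum_{i=3}^{n}\lfloor (i-1)/2\rfloor$ already computed in the collider theorem, since both run over $\lfloor m/2\rfloor$ for $m=2,\dots,n-1$. Its value is therefore $\frac{n(n-2)}{4}$ for even $n$ and $\frac{(n-1)^2}{4}$ for odd $n$. As a quick check, take $\sum_{m=2}^{n-1} m/2$ and subtract $\tfrac12$ for each odd $m$ in the range.

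For the lower bound, Lemma~\ref{L: more forks in collider fork} gives a decomposition with $\frac{n(n-1)}{4}-\lfloor n/4\rfloor$ forks. For admissible $n$, that lemma also records that this count simplifies to exactly the bound above. Explicitly, $n\equiv 0\pmod 4$ gives $\frac{n(n-1)}{4}-\frac n4=\frac{n(n-2)}{4}$. Likewise $n\equiv1\pmod4$ gives $\frac{n(n-1)}{4}-\frac{n-1}{4}=\frac{(n-1)^2}{4}$. Upper and lower bounds agree, which proves the theorem.

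The main point to watch is the simplification step, which uses admissibility to identify $\lfloor n/4\rfloor$ with $n/4$ or $(n-1)/4$. The upper-bound argument holds for every $n$, so the same reasoning gives the fork-packing number in general.
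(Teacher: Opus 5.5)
Your proposal is correct and follows essentially the same route as the paper: bound the forks centred at each $v_i$ by $\lfloor (n-i)/2\rfloor$, sum over the possible centres $v_1,\dots,v_{n-2}$, and then observe that the row-pairing construction of Lemma~\ref{L: more forks in collider fork} attains this bound for admissible $n$. Your explicit evaluation of the sum and of $\frac{n(n-1)}{4}-\lfloor n/4\rfloor$ checks out and fills in the arithmetic that the paper only states.
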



\begin{proof}

Let $V(TT_n)=\{v_1,v_2,\ldots,v_n\}$ be the topological ordering of $TT_n$. With this ordering, a fork has the form $v_j \leftarrow v_i \to v_k$, for $i < j,k$. Therefore, $\deg^+(v_i) \geq 2$.
Hence, only the vertices $v_1,v_2,\ldots,v_{n-2}$ can occur as the middle vertex of a fork.
For each $v_i \in \{v_1,\ldots,v_{n-2}\}$, $\deg^+(v_i) = n-i$.
Therefore, the number of forks with middle vertex $v_i$
is at most $\left\lfloor \frac{n-i}{2}\right\rfloor$.
Thus, the maximum possible number of forks is:
\[
\text{total forks}= \sum_{i=1}^{n-2}\left\lfloor \frac{n-i}{2}\right\rfloor =
\begin{cases}
\displaystyle
\sum_{i=1}^{n-2}\frac{n-i}{2}\;-\;\frac{1}{2}\cdot\frac{n-2}{2}
\;=\;\frac{n(n-2)}{4},
& \text{if $n$ is even},\\[2ex]
\displaystyle
\sum_{i=1}^{n-2}\frac{n-i}{2}\;-\;\frac{1}{2}\cdot\frac{n-3}{2}
\;=\;\frac{(n-1)^2}{4},
& \text{if $n$ is odd}.
\end{cases}
\]

In Lemma~\ref{L: more forks in collider fork}, we constructed  a chain--collider-fork decomposition of $TT_n$ with $\frac{n}{4}(n-2)$ forks when $n$ is even and $\frac{(n-1)^2}{4}$ when $n$ is odd. Thus, the above upper bounds are tight, proving the claim.     

\end{proof}

\begin{note}
The construction above shows that the forks are chosen optimally by selecting all possible pairs of dots in each row of the dots-in-cells diagram. Therefore, for any $n$, the fork-packing number of $TT_n$ is
$n(n-2)/4$ when $n$ is even; $(n-1)^2/4$ when $n$ is odd.
\end{note}

\section{Conclusion}
\label{S:sec7}

In this paper, we have investigated decompositions and packings of transitive tournaments using the three connected two-arc motifs: chains, colliders, and forks. First, we constructed decompositions of \(TT_n\) into a mixture of these motifs whenever such decompositions are admissible. We then considered the corresponding pure packing problem for each individual motif and determined the maximum number of arc-disjoint copies of a chain, a collider, or a fork in \(TT_n\).


Let \(H\) denote a chain, a collider, or a fork. Then the \(H\)-packing number of \(TT_n\) for any $n$ is
\[
P(H,TT_n)=
\begin{cases}
\displaystyle
\frac{n(n-2)}{4}
& \text{if \(n\) is even},\\[1ex]
\displaystyle
\frac{(n-1)^2}{4}
& \text{if \(n\) is odd}.
\end{cases}
\]

A transitive tournament is a special class of directed acyclic graph (DAG). DAGs are widely used in causal modelling, including in epidemiology, where they provide a graphical framework for representing a priori assumptions about cause-and-effect relationships and the underlying data-generating process \cite{Piccininni2020}. Chains, colliders, and forks are fundamental local configurations in DAGs \cite{ByeonLee2023}. 

Although the present paper focuses on the highly structured setting of transitive tournaments, the results give a precise extremal understanding of how these basic two-arc motifs can be packed in an acyclic directed graph. This suggests possible future directions involving motif decompositions and packing problems in broader classes of DAGs, with potential relevance to the structural analysis of causal graphs.


\begin{thebibliography}{9}

\bibitem{AlspachPullman1974}
    B. R. Alspach and N. J. Pullman, 
    Path decompositions of digraphs,
    {\it Bull. Aust. Math. Soc.,} {\bf 10 (3)} (1974), 421-427.

\bibitem{AlspachMasonPullman1976}
    B. Alspach, D. Mason and N. Pullman, 
    Path numbers of tournaments,
    {\it J. Combin. Theory Ser. B,} {\bf 20} (1976), 222-228.


\bibitem{BangJensenGutin2009}
    J. Bang-Jensen,
    and G. Z. Gutin,
    {\it Digraphs: Theory, Algorithms and Applications},
    2nd ed., Springer Monographs in Mathematics,
    Springer, London, 2009.

\bibitem{ByeonLee2023}
S. Byeon and W. Lee,
Directed acyclic graphs for clinical research: a tutorial,
{\it J. Minim. Invasive Surg.} {\bf 26 (3)} (2023), 97--107.

\bibitem{CaroSchonheim1980}
Y. Caro and J. Sch\"onheim,
Decomposition of trees into isomorphic subtrees,
{\it Ars Combin.,} {\bf 9} (1980), 119--130.
    

\bibitem{ChuFanZhou2021}
    Y. Chu, G. Fan, and C. Zhou,
    Decompositions of 6-regular bipartite graphs into paths of length six,
    {\it Graphs Combin.} {\bf 37 (1)} (2021), 263--269.

\bibitem{DeVasDevillers2024}
    A. De Vas Gunasekara and A. Devillers,
    Transitive path decompositions of Cartesian products of complete graphs,
    {\it Des. Codes Cryptogr.,} {\bf 92} (2024), 4231--4245.
    

\bibitem{GiraoGranetKuhnLoOsthus2023}
    A. Girão, B. Granet, D. Kühn, A. Lo, and D. Osthus,
    Path decompositions of tournaments,
    {\it Proc. Lond. Math. Soc.} {\bf 126 (3)} (2023), no. 2, 429--517.


\bibitem{Gorlichetal2007}
    A. G{\"o}rlich, R. Kalinowski, M. Meszka, M. Pil\'sniak and M. Wo\'zniak,
    \textit{A note on decompositions of transitive tournaments},
    Discrete Mathematics, 307(7--8):896--904, 2007.
    doi:10.1016/j.disc.2005.11.045

\bibitem{Gorlichetal2007II}
    A. G{\"o}rlich, R. Kalinowski, M. Meszka and  M. Pil\'sniak, 
    \textit{A note on decompositions of transitive tournaments II},
    Australasian Journal of Combinatorics, \textbf{37} (2007), 57--66.


\bibitem{LoPatelSkokanTalbot2020}
    A. Lo, V. Patel, J. Skokan, and J. Talbot, 
    \textit{Decomposing tournaments into paths},
    Proc. Lond. Math. Soc., \textbf{121 (2)} (2020), 426--461.

\bibitem{Piccininni2020}
M. Piccininni, S. Konigorski, J. L. Rohmann and T. Kurth,
Directed acyclic graphs and causal thinking in clinical risk prediction modeling,
{\it BMC Med. Res. Methodol.} {\bf 20} (2020), Article 179.

\bibitem{SaliSimonyiTardos2021}
A. Sali, G. Simonyi, and G. Tardos,
Partitioning transitive tournaments into isomorphic digraphs,
{\it Order,} {\bf 38} (2021), 211--228.

\bibitem{Tarsi1983}
    M. Tarsi,
    Decomposition of a complete multigraph into simple paths: non-balanced handcuffed designs,
    {\it J. Combin. Theory Ser. A,} {\bf 34 (1)} (1983), 60--70. 


\bibitem{WangWuAn2017}
F. Wang, B. Wu, and X. An,
A \( \overrightarrow{P_3} \)-decomposition of tournaments and bipartite digraphs,
{\it Discrete Appl. Math.,} {\bf 226} (2017), 158--165.

\end{thebibliography}
\end{document}